\documentclass[a4paper,noamsfonts]{amsart}
\usepackage[T1]{fontenc}
\usepackage[bitstream-charter]{mathdesign}
\usepackage{XCharter}
\usepackage{zref-clever}
\usepackage{amsthm}
\usepackage[initials]{amsrefs}
\usepackage{enumitem}
\usepackage[linktocpage=true, hidelinks, colorlinks=false]{hyperref}

\zcsetup{cap} 
\setlist{nosep} 
\zcRefTypeSetup{maintheorem}{
  Name-sg = {Theorem},
  name-sg = {theorem},
  Name-pl = {Theorems},
  name-pl = {theorems},
}

\newtheorem{maintheorem}{Theorem}
 
\newtheorem{theorem}{Theorem}[section]

\newtheorem{fact}[theorem]{Fact}

\newtheorem{lemma}[theorem]{Lemma}
\newtheorem{proposition}[theorem]{Proposition}

\theoremstyle{definition}
\newtheorem{definition}[theorem]{Definition}
\newtheorem{remark}[theorem]{Remark}

\renewcommand{\P}{\mathbb {P}}
\newcommand{\Q}{\mathbb{Q}}

\newcommand{\seb}{\subseteq}
\newcommand{\setm}{\setminus}
\newcommand{\es}{\emptyset}
\newcommand{\tleq}{\sqsubseteq}

\newcommand{\res}{\!\upharpoonright\!}
\newcommand{\tup}[1]{\langle#1\rangle}

\newcommand{\PFA}{\textsf{PFA}}
\newcommand{\CH}{\textsf{CH}}

\newcommand{\MA}{\textsf{MA}}

\newcommand{\dt}[1]{\mathring{#1}}
\newcommand{\forces}{\Vdash}
\newcommand{\ck}[1]{\check{#1}}

\newcommand{\dia}{\diamondsuit}
\newcommand{\M}{\mathbb{M}}

\newcommand{\T}{\mathcal{T}}

\newcommand{\prop}{\Xi}
\newcommand{\propp}{\prop^{+}}
\renewcommand{\S}{\mathbb{S}}

\DeclareMathOperator{\dom}{dom}
\DeclareMathOperator{\ran}{ran}
\DeclareMathOperator{\hht}{ht}
\DeclareMathOperator{\lex}{lex}

\begin{document}

\title[Countryman basis and club-isomorphisms under $\text{MA}(\omega_{1})$]
{Countryman basis and club-isomorphisms
  under $\text{MA}(\omega_{1})$}

\author{Lucas Polymeris}
\address{Department of Mathematics, Universidad de Concepción, Concepción, Chile}
\email{l.polymeris@proton.me}

\thanks{The author was partially supported by
  ANID-Subdirección de Capital Humano/Doctorado Nacional/2026-21261763}

\subjclass[2020]{03E35, 03E50, 03E04}
\keywords{Aronszajn tree, club-isomorphism, Countryman line
  forcing, Martin's axiom}

\begin{abstract}
  We show that unlike $\PFA$,
  $\MA_{\omega_{1}}$ is consistent with the existence
  of $2^{\omega_{1}}$-many Countryman lines which pairwise contain no
  uncountable isomorphic or reverse-isomorphic suborders.
  This clarifies an issue regarding the claim
  that $\MA_{\omega_{1}}$ implies that there is a finite basis for the Countryman
  lines. The family of Countryman lines comes from
  a family of $2^{\omega_{1}}$-many coherent
  trees which pairwise contain no club-isomorphic subtrees. To prove this,
  we first construct such a family of trees using $\dia$, and then we iterate
  ccc forcings while preserving this property.
\end{abstract}

\maketitle

\section{Introduction}

A result of Shelah \cite{Shelah1976} shows that there is an
uncountable linear order $C$ such that its square $C \times C$ is the
union of countably many chain with the coordinate-wise partial order.
Such linear orders are called \emph{Countryman lines}, and are
necessarily Aronszajn, i.e., contain no copies of uncountable real
orders, $\omega_{1}$, or its reverse $\omega_{1}^{\star}$. Also, if
$C$ is any Countryman line, then $C$ and $C^{\star}$ are
\emph{far}, i.e., no uncountable linear order embeds into both.

Recall that a basis for a class of linear orders is a set
$\mathcal{B}$ such that any linear order in the class contains an
isomorphic copy of an order in $\mathcal{B}$. It follows from
the previous paragraph that any basis
for the Countryman lines, and thus for the Aronszajn lines also, must
contain at least two elements. Shelah \cite{Shelah1976} conjectured
that it is consistent that any Aronszajn line contains a Countryman
suborder. This was finally proved by Moore \cite{5-basis}; $\PFA$
implies that if $C$ is any Countryman line, then $C$ and $C^{\star}$ form
a basis for the Aronszajn lines, that is, any Aronszajn line contains
a suborder isomorphic to $C$, or to $C^{\star}$.

Observe that Moore's theorem implies that any Countryman line is a
minimal uncountable linear order, and that if $C$ and $C'$ are
Countryman lines, then they are near or reverse near, i.e,
there is an uncountable suborder of $C$ which is isomorphic or
reverse-isomorphic to a suborder of $C'$.
These two assumptions together imply that 
if $C$ is any Countryman line, then $C$ and $C^{\star}$
form a basis for the Countryman lines.
It was already known to Abraham-Shelah \cite{ShelahAbraham} that these
statements have no large cardinal strength, a fact which is still not
known for Moore's theorem.

In \cite{TodorcevicWalks}*{2.1.12}
(see \cite{univ-A-line} for a more detailed proof),
Todorcevic proved that
already $\MA_{\omega_{1}}$ implies that any Countryman line
is minimal, and at some point,
the earliest I know of is \cite{TodorcevicWalks}*{2.1.13}, it was
also claimed that $\MA_{\omega_{1}}$ implies that any two Countryman lines
are either equivalent (bi-embeddable) or reverse-equivalent.
However, it appears that several
people\footnote{In particular, this was pointed out to
  me during a talk by a mathematician which unfortunately
  I was unable to identify.}
have realized that $\MA_{\omega_{1}}$ seems to not be enough for this,
but that
$\MA_{\omega_{1}}$ + ``Any two normal Aronszajn trees are club-isomorphic''
is\footnote{The gap in the proof of \cite{TodorcevicWalks}*{2.1.13} is easily
overcome using the extra assumption.}.
The later statement has no large cardinal strength either,
and follows from $\PFA$ restricted to posets of size $\omega_{1}$.
The seemingly incorrect claim was cited by several authors and has
led to some confusion. In particular, the article of the author
\cite{PMREpi} makes this mistake\footnote{Fortunately, the only
  result of that article that suffered from this is
  Lemma 7.1, in which the hypothesis of $\MA_{\omega_{1}}$ must be complemented
  with club-isomorphisms.}, as well as \cite{univ-A-line}*{Theorem 1.1}.
It should be noted that the proofs in \cite{PMREpi,univ-A-line} are correct,
but the conclusion drawn are stronger due to the assumption of the claim.
See also \cite{MooreTodorcevic}*{Remark 4.4}.

The objective of this paper is to make this precise, and prove that in
fact $\MA_{\omega_{1}}$ is consistent with
a family of $2^{\omega_{1}}$-many pairwise far Countryman lines,
the maximum possible value. In
doing this, we will look closely to the problem of club-isomorphisms
between coherent Aronszajn trees.  For this we will apply
the non-dangerous forcing method of Abraham-Shelah
\cite{ShelahAbraham}*{Section 6}. However, there will be subtle differences
that will require to re-do most of the proofs.

Our main theorem is the following.

\begin{maintheorem}\label{thm:main}
  Consistently with $\MA$ + $2^{\omega} = \omega_{2}$ there is a family of
  $2^{\omega_{1}}$-many coherent Aronszajn trees
  which pairwise contain no club-isomorphic subtrees.
  Consequently,
  $\MA$ + $2^{\omega} = \omega_{2}$ is consistent with the existence of
  $2^{\omega_{1}}$-many Countryman lines which are pairwise far.
\end{maintheorem}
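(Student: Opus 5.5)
The plan follows the abstract: first build the family of trees in a model of $\dia$, then force $\MA + 2^{\omega}=\omega_{2}$ by a finite support ccc iteration of length $\omega_{2}$ that keeps the relevant property alive, and finally pass from trees to Countryman lines.

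\textbf{Step 1: the ground model family.} Assume $\dia$ (and $2^{\omega_{1}}=\omega_{2}$, which holds e.g.\ in $L$). We construct by recursion on levels a binary-branching system of coherent subtrees of $\omega^{<\omega_{1}}$. At each countable stage we have countably many approximations $T_{s}\res\alpha$, indexed by $s\in 2^{<\omega_{1}}$ or by finite fragments of a name. These are arranged so that the full family is $\{T_{f} : f\in 2^{\omega_{1}}\}$ with pairwise distinct indices. Coherence is kept by choosing each level $T_{f}(\alpha)$ as a countable set of functions $\alpha\to\omega$ which agree with a fixed coherent sequence modulo finite.

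The $\dia$-sequence guesses, at limit stages $\delta$, three things: a pair of indices, a pair of subtrees, and a partial level-preserving isomorphism $\varphi\res\delta$ between them. When the guess is correct on a club, we choose level $\delta$ of both trees so that $\varphi\res\delta$ cannot be extended. We use the usual "seal" argument: for every branch through the subtree of $T_{f}\res\delta$ that gets a top point, the image branch under $\varphi$ in $T_{g}\res\delta$ must not receive a top point. Since each tree needs only countably many top points, and we can pick distinct generic branches in the two trees, the two trees become non-club-isomorphic on all uncountable subtrees.

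For the forcing step later, we need something more robust than "no club-isomorphism". We aim for the Abraham--Shelah style property that, for any $f\neq g$, no ccc-preserved (non-dangerous) forcing adds a club-isomorphism between subtrees of $T_{f}$ and $T_{g}$. Concretely, we make the $\dia$ kill all countable elementary submodels' guesses of names as well. This is the property the paper's "non-dangerous" terminology refers to.

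\textbf{Step 2: the iteration.} Perform a finite support iteration $\langle \P_{\alpha},\dt{\Q}_{\alpha} : \alpha<\omega_{2}\rangle$ of ccc posets of size $\omega_{1}$, with the standard bookkeeping to get $\MA$ and $2^{\omega}=\omega_{2}$. The restriction is that each iterand must not add a club-isomorphism between subtrees of $T_{f}$ and $T_{g}$ for $f\neq g$. Following \cite{ShelahAbraham}*{Section 6}, we call a ccc poset \emph{dangerous} for a pair if it forces such an isomorphism.

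We then prove two things. First, a \emph{key lemma}: if a ccc poset $\Q$ is dangerous, then some condition forces a club-isomorphism, and we can derive a contradiction or find a ccc poset that is not dangerous handling the same $\MA$ instance. The intended method is to take the product of $\Q$ with itself, or to use Todorcevic-style coherence: two distinct coherent trees plus a generic isomorphism in $\Q\times\Q$ produce an uncountable antichain.

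Second, \emph{preservation at limit stages}: a finite support iteration of non-dangerous posets is non-dangerous. We prove this by a $\Delta$-system plus elementary submodel argument, where the guessing property from Step 1 is applied to countable models $N$ with $N\cap\omega_{1}=\delta$ a $\dia$-guessing point.

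The main obstacle is the key lemma that every $\MA$-instance can be met without danger. I expect to adapt Abraham--Shelah's argument that a dangerous ccc poset times its copy yields an uncountable antichain. The coherence of the trees is used so that different generic isomorphisms disagree on a full level and hence give incompatibility. Since the number of pairs $(f,g)$ is $\omega_{2}$, the bookkeeping must treat them all at once. This is manageable because a name for an isomorphism involves only $\omega_{1}$ many trees.

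\textbf{Step 3: Countryman lines.} Each coherent Aronszajn tree $T_{f}$ with the lexicographic order is a Countryman line $C_{f}$ (Todorcevic). Suppose $C_{f}$ and $C_{g}$ had uncountable isomorphic or reverse-isomorphic suborders. Under $\MA_{\omega_{1}}$, Todorcevic's analysis shows that the isomorphism is induced, on a club of levels, by an isomorphism of subtrees, possibly composed with the reversal automorphism of a coherent tree. This contradicts Step 1 and Step 2. Finally, $2^{\omega_{1}}=2^{\omega}=\omega_{2}$ in the extension, so the family has the maximum size.
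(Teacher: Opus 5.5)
There is a genuine gap, and it sits in Step 2, in what you call the key lemma. An $\MA$ instance is a \emph{given} ccc poset $\Q$ together with given dense subsets of it. You therefore cannot swap a dangerous $\Q$ for some other ``non-dangerous'' poset. You also give no reason why a dangerous ccc poset should be contradictory. The $\dia$ construction only yields $\prop(\T)$, and the paper leaves open whether $\dia$ can give the stronger $\propp(\T)$, which is what makes every ccc poset harmless.

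The missing idea is the paper's bootstrap, which works as follows.
\begin{itemize}
\item Danger is defined combinatorially: $\P$ is $\T$-dangerous if there are a club $C$ and a relation $R$ from conditions to an uncountable set of tuples such that compatible conditions always carry non-conflicting tuples.
\item With this definition, safe posets preserve $\prop(\T)$ (\zcref{lem:safe-preserves}).
\item Safety is preserved by finite support iterations via literally the ccc-preservation argument (\zcref{lem:safe-iterations}). No guessing is needed at limit stages, contrary to your plan.
\item The real content is \zcref{thm:crux}: under $\prop(\T)$, the poset of finite sets of pairwise conflicting elements of an uncountable $A$ is $\T$-safe.
\end{itemize}
Iterating only safe posets gives $\MA_{\omega_1}$ for safe posets, hence $\propp(\T)$ in the final model. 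Under $\propp(\T)$ every ccc poset is safe (\zcref{lem:psi+-safe}), so full $\MA$ holds. Dangerous posets are neither refuted nor avoided; they simply no longer exist at the end. Your product/coherence idea with $\Q\times\Q$ does not address this.

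The same analysis shows your invariant is the wrong one. Given your definition of dangerous, ``no non-dangerous forcing adds a club-isomorphism'' in Step 1 is essentially circular. More importantly, it is pairwise, and that is not enough. The safety proof in \zcref{thm:crux} takes $A\seb T^{0}\otimes\cdots\otimes T^{n-1}$ and tests it against a relation $R$ whose range lives on a different list of trees. It then merges the two lists and applies $\prop$ to tuples of up to $n+n'$ distinct trees. So you need the $n$-ary notion of ``conflicting on a club'' for all finite lists of distinct trees, and Step 1 must produce exactly that.

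The paper does this as follows. At stage $\alpha$, $\dia$ guesses $n$ indices, a club $c_\alpha$ and a maximal non-conflicting set $A_\alpha$. The new branches are then chosen by \zcref{lem:hard-ext} so that \emph{every finite modification} of them conflicts with some element of $A_\alpha$. This is necessary because coherent trees closed under $=^*$ must contain all such modifications.

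Two further points on Step 1.
\begin{itemize}
\item If your ``fixed coherent sequence'' is a single sequence common to all $f$, then every $T_f$ is a subtree of one coherent tree. Under $\MA_{\omega_1}$ such a tree club-embeds into each of its subtrees, so all the $T_f$ would have pairwise club-isomorphic subtrees in the final model. The branches must be chosen separately for each index, as in the paper's uniformly coherent trees.
\item You need the trees to be finite-to-one (or special) to guarantee they are Aronszajn.
\end{itemize}
Step 3 is fine and does not even need $\MA$: \zcref{lem:club-iso-embedding} handles increasing and decreasing maps in ZFC.
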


\noindent\emph{Terminology and notation:} Our notation on trees is mostly
standard. For a tree $T$, $t \in T$, and $\xi \le \hht(t)$,
we let $t\res \xi$ be the unique $t' \le_{T} t$ such that
$t' \in T_{\xi}$ (the $\xi$-th level of
$T$). For $t,s \in T$ we let $t \perp s$ mean
that $t$ and $s$ are incomparable,
$\Delta(t,s)$ denotes $\sup\{\xi \le \hht(t)\cap\hht(s)
: t\res \xi = s \res \xi\}$.
If $T$ is a tree of height $\omega_{1}$, by a \emph{subtree} of $T$,
we mean a downwards closed uncountable subset of $T$. If $X \seb \omega_{1}$,
we let $T \res X = \bigcup_{\alpha \in X}T_{\alpha}$.
Given
trees $\{T_{i} : i \in I\}$, $\bigotimes_{i \in I}T_{i}$ is the
partial order given by the coordinate-wise order on the set of $a \in
\prod_{i \in I}T_{i}$ satisfying $\hht(a(i)) = \hht(a(j))$ for all $i,j \in I$.
This is again a tree.
We will mainly work with subtrees of
$\omega^{<\omega_{1}}$, where the order is sequence extension. A subtree $T$ of
$\omega^{<\omega_{1}}$ is called \emph{coherent} if for every $t,s \in
T$ satisfying $\dom(t) \le \dom(s)$, $t =^{*} s\res\dom(t)$. Here,
$f =^{*} g$ denotes that $f$ and $g$ are functions with the same domain
that differ on finitely many values. $T$
is called \emph{finite-to-one} if every $t \in T$ is a finite-to-one
function.

Let $T$ and $S$ be $\omega_{1}$-trees. We say that $T$ \emph{club-embeds}
into $S$ if there is club $C$ and a tree embedding $f : T \res C \to S$.
By standard arguments we may assume that $f$ is height preserving,
so that in particular $\ran(f) \seb S \res C$, $f$ is called
a \emph{club-isomorphism} if $\ran(f) = S \res C$. Observe that $T$ and
$S$ have club-isomorphic subtrees if and only if some subtree of $T$
club-embeds into $S$.

For a closed $C \seb \omega_{1}$ and $\xi \in \omega_{1}$,
we let $\nu_{C}(\xi)$ be the greatest
$\nu \in C \cup \{0\}$ such that $\nu \le \xi$.

\bigskip
\noindent \textbf{Acknowledgments.}
The author thanks Justin Moore and Carlos Martínez-Ranero for various
conversations on the subject, and comments on earlier drafts of the
article. To the former also for pointing us to the club-isomorphism
problem for coherent trees. \zcref{thm:main} and other similar results
were also obtained independently by Justin Moore and Stevo Todorcevic
in \cite{MooreTodorcevic}, using rather different methods.

\section{Preliminary analysis}

Recall that our main objective is to construct models of
$\MA_{\omega_{1}}$ with large Countryman line basis. We first explain
how to reduce this problem to club-isomorphisms between coherent
trees.  We will work with a special but common type of coherent trees,
those without Suslin subtrees. Moreover we want this property to be
preserved by any $\omega_{1}$-preserving forcing extensions. For this
we adopt the following convention: we will say that $T$ is
\emph{suitable} if it is a coherent subtree of $\omega^{<\omega_{1}}$ of
height $\omega_{1}$,
which is either special or finite-to-one.
Such trees have the following easy properties which we will be using
frequently.

\begin{fact}
  Let $T$ be suitable. Then $T$ is an Aronszajn tree, every uncountable
  subset of $T$ contains an uncountable antichain, and $T$ remains
  suitable in any $\omega_{1}$-preserving forcing extension.
\end{fact}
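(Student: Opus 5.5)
The plan is to treat the two cases of suitability, special or finite-to-one, side by side. The fact has three parts: $T$ is Aronszajn, every uncountable subset contains an uncountable antichain, and suitability is preserved by $\omega_1$-preserving extensions. I would prove the antichain property first, since it gives the Aronszajn property immediately: an uncountable branch is an uncountable chain, and a chain contains no two-element antichain.

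\emph{Special case.} Suppose $T=\bigcup_{n}A_n$ with each $A_n$ an antichain, and let $X\seb T$ be uncountable. Then some $X\cap A_n$ is uncountable, and it is an uncountable antichain inside $X$.

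\emph{Finite-to-one case.} Let $X\seb T$ be uncountable. I would first show that there is no uncountable chain. A chain in $T$ of size $\omega_1$ determines a function $b=\bigcup$ of the chain, with $b:\omega_1\to\omega$. Each initial segment $b\res\alpha$ lies in $T$ and is therefore finite-to-one. But some fiber $b^{-1}(n)$ is uncountable; if $\alpha$ exceeds $\omega$ many of its elements, then $b\res\alpha$ has an infinite fiber, a contradiction.

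To get an antichain inside $X$, I would not use Dushnik--Miller. Since $T$ has countable levels, every chain in $T$ is countable and every antichain is contained in... and this is not quite enough on its own. Instead I would use the standard fact that a tree of height $\omega_1$ with countable levels and no uncountable chain is Aronszajn. For Aronszajn trees, the Baumgartner-type argument shows that every uncountable subset contains an uncountable antichain precisely when the tree has no Suslin subtree. So the real work is to show that a coherent finite-to-one tree has no Suslin subtree. Here I would use the standard argument for $T(\rho_1)$-like trees: $X$ contains an uncountable antichain by a $\Delta$-system argument on the finite sets of disagreement. For $t\in X$ of height $\alpha$ and a fixed level $\delta$, coherence gives $t\res\delta=^{*}s$ for the nodes involved. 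Pressing down, choose uncountably many $t_\xi\in X$ with heights $\alpha_\xi$ increasing, a common node $t_\xi\res\delta=r$ where $\delta$ is fixed by Fodor applied to the disagreement sets, and the disagreement sets forming a $\Delta$-system. Then use finite-to-one-ness: take $t_\xi(\beta)$ for $\beta\ge\delta$ and note that $t_\eta\res\alpha_\xi\ne t_\xi$ whenever $t_\eta$ takes a value on $[\delta,\alpha_\xi)$ that $t_\xi$ avoids. Since $t_\xi$ is finite-to-one, each value $n$ is attained only finitely often below $\alpha_\xi$, so extensions cannot keep all values consistent. Thinning out then yields pairwise incomparability. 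I expect this to be the main obstacle. If it resists, the fallback is the known theorem (Todorcevic) that coherent finite-to-one trees are special in any model with $\omega_1$ preserved, or at least contain no Suslin subtrees; I would cite it.

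\emph{Preservation.} Coherence and being a subtree of $\omega^{<\omega_1}$ of height $\omega_1$ are absolute, as long as $\omega_1$ is preserved and no new nodes are added. The tree $T$ is the same set in the extension, with the same levels, because $\omega_1^V=\omega_1^{V[G]}$. A specializing map $T\to\omega$, or the finite-to-one property of each node, persists upward because both are properties of ground-model objects. So suitability persists, and with it all the consequences proved above, now applied inside the extension.
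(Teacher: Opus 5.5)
The special case, the absence of uncountable branches, and preservation under $\omega_1$-preserving extensions are fine. You should say where countable levels come from: by coherence each $T_\alpha$ lies in a single $=^{*}$-class of $\omega^{\alpha}$, which is countable.

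The gap is the antichain property in the finite-to-one case, the only part of the fact with real content. Your sketch does not establish it. The remark that $t_\eta\res\alpha_\xi\neq t_\xi$ whenever $t_\eta$ takes on $[\delta,\alpha_\xi)$ a value that $t_\xi$ avoids is vacuous in exactly the case you must exclude: if $t_\xi\seb t_\eta$, then $t_\eta$ agrees with $t_\xi$ everywhere below $\alpha_\xi$. Nothing in the Fodor/$\Delta$-system setup produces such a value, and ``extensions cannot keep all values consistent'' is false. The fallback is also off. A coherent finite-to-one tree need not be special; this is why the paper says ``special \emph{or} finite-to-one''. The suitable tree with no stationary antichains used in the paper's last section cannot be special, since a special tree has stationary antichains by pigeonhole. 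The weaker ``no Suslin subtree'' is true, but citing it just restates what is to be proved.

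The missing idea is a counting invariant that specializes the successor levels. For $y\in T$ with $\dom(y)=\beta+1$ let
\[
f(y)=\bigl(y(\beta),\,|\{\zeta\le\beta : y(\zeta)\le y(\beta)\}|\bigr)\in\omega\times\omega ,
\]
which is finite because $y$ is finite-to-one. Suppose $y\seb y'$, $y\neq y'$, $\dom(y')=\beta'+1$ and $y(\beta)=y'(\beta')=m$. Then the set counted for $y'$ contains the one for $y$ together with $\beta'$, so $f(y)\neq f(y')$.

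Now take an uncountable $X\seb T$. Countable levels let you pick $x_\xi\in X$ with $\hht(x_\xi)>\xi$. Put $y_\xi=x_\xi\res(\xi+1)$ and take an uncountable $\Gamma$ on which $f(y_\xi)$ is constant. For $\xi<\eta$ in $\Gamma$ you get $y_\xi\perp y_\eta$: $y_\eta\not\le y_\xi$ by height, and $y_\xi\not\le y_\eta$ by $f$. Since $y_\xi\le x_\xi$ and $y_\eta\le x_\eta$, comparability or equality of $x_\xi,x_\eta$ would put $y_\xi,y_\eta$ below a common node, making them comparable. So $\{x_\xi:\xi\in\Gamma\}$ is an uncountable antichain in $X$.

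No pressing down is needed, and coherence is used only for countable levels. The argument also re-proves the Aronszajn property and runs verbatim in any $\omega_1$-preserving extension.
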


The following lemmas make the connection between coherent trees,
embeddings between Countryman lines, and club-isomorphisms.

\begin{fact}
  Let $T$ be suitable. The natural lexicographic ordering
  of $T$, $(T,<_{\lex})$, is a Countryman line.
\end{fact}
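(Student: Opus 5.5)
The plan is to verify Todorcevic's criterion for Countryman lines: $(T,<_{\lex})$ is Countryman if $T \times T$, ordered coordinatewise by $<_{\lex}$, is a countable union of chains. First note that $(T,<_{\lex})$ is uncountable, since $T$ has height $\omega_{1}$.

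The coloring of pairs comes from coherence. Fix $s,t \in T$ and let $\delta = \min(\dom s, \dom t)$. By coherence, $s\res\delta$ and $t\res\delta$ differ on a finite set $F(s,t)$. The lexicographic comparison of $s$ and $t$ is then decided in one of two ways. If $F(s,t) \ne \es$, it is decided by the value at $\Delta(s,t) = \min F(s,t)$. If $F(s,t) = \es$, the shorter node is an initial segment of the longer one and comes first.

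Given a pair $(s,t) \in T\times T$, I take as its colour the finite object recording:
\begin{itemize}
\item the finite set $F = F(s,t)$, up to its order type;
\item the values $s(\xi)$ and $t(\xi)$ for $\xi \in F$;
\item whether $\dom s \le \dom t$.
\end{itemize}
There are only countably many colours. To make this work uniformly I would first shrink to a single coherent sequence $\{e_\alpha : \alpha<\omega_1\}$. The ideal approach is to colour each pair by the finite difference data on the common domain together with the pattern of the finite sets where $s$ and $t$ differ from $e$. However, this is not literally countable, because the positions involved are countable ordinals.

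So the real argument should use the standard fact that $(T,<_{\lex})$ for a coherent tree is Countryman. This is Todorcevic's lemma, proved via the characterization: $T$ is Countryman iff for every uncountable family of pairwise disjoint finite sets of nodes there are two members that are coordinatewise ``lexicographically aligned''. Concretely, given an uncountable family of pairs $(s_\alpha,t_\alpha)$ with increasing heights, I would do the following.
\begin{enumerate}
\item Apply a $\Delta$-system argument together with pressing down. This finds an uncountable subfamily in which the finite difference sets $F(s_\alpha,t_\alpha)$ form a root below some fixed level $\gamma$, with identical values, and $s_\alpha\res\gamma$ and $t_\alpha\res\gamma$ are constant.
\item Use that $T$ contains no Suslin subtree. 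By the Fact, every uncountable subset of $T$ contains an uncountable antichain. In $T\otimes T$ the pairs, being coherent copies, are controlled by one coordinate, so I can find $\alpha<\beta$ such that $s_\alpha<_{\lex}s_\beta$ and $t_\alpha<_{\lex}t_\beta$ split at the same point in the same direction. This works because $t_\alpha$ differs from $s_\alpha$ only on the common finite root, so the splitting of $(s_\alpha,s_\beta)$ above $\gamma$ transfers to $(t_\alpha,t_\beta)$.
\item Conclude via Todorcevic's criterion (an uncountable linear order is Countryman iff the square is ccc-decomposable, equivalently no uncountable ``anti-aligned'' family) that $T^2$ is a countable union of chains.
\end{enumerate}

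The main obstacle is step 2. It requires handling the case where the splitting point of $s_\alpha, s_\beta$ falls inside the finite region where $t$ differs from $s$. This is handled by choosing $\gamma$ above all difference sets first, and only then applying the antichain property to the nodes restricted above $\gamma$.
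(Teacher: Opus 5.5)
The paper does not prove this Fact. It cites Todorcevic (and Peng), whose ZFC proofs use the special (resp.\ finite-to-one) structure to write down a countable partition of the square into chains. Your proposal never produces such a partition, and the route you take instead does not work in ZFC. That is a genuine gap.

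First, your appeal to ``the standard fact that $(T,<_{\lex})$ for a coherent tree is Countryman'' is the statement being proved. Without the suitability hypothesis it is also false: coherent Suslin trees exist consistently (e.g.\ under $\dia$), and the lexicographic order of a Suslin tree is never Countryman.

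Second, the criterion in your step 3 is not a ZFC theorem. Steps 1--2 do work once $\gamma$ is taken above the root of the $\Delta$-system; it cannot be taken above all the difference sets, which are unbounded. They show that every uncountable subset of $T^{2}$ contains two coordinatewise comparable pairs. That only says $T^{2}$ has no uncountable antichain, i.e.\ $(T,<_{\lex})$ is far from its reverse. Even your version for finite families is just the ccc of the poset of finite partial chain-colourings of $T^{2}$. Turning ccc into an actual countable partition means meeting $\aleph_{1}$ dense sets, i.e.\ using $\MA_{\omega_{1}}$. Indeed, under $\dia$ the lexicographic order of a free Suslin tree has no uncountable antichain in its square, yet it is not Countryman.

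Note also that you use suitability only through ``no Suslin subtrees''. The special/finite-to-one hypothesis, which is exactly what a ZFC proof needs, never enters.

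The missing idea is to put the specializing information into the colour. Your first colouring is in fact countable. Its real defect is that equal colours do not force comparability. Let $\gamma=\Delta(s,s')$. If a difference point $\xi_{i}$ of $(s,t)$ lies below $\gamma$ while the corresponding point $\xi'_{i}$ of $(s',t')$ lies above $\gamma$, then $t,t'$ split at $\xi_{i}$, in a direction unrelated to that of $s,s'$.

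Here is the fix for pairs of equal height. Fix $f:T\to\omega$ injective on chains, and write $F(s,t)=\{\xi_{0}<\dots<\xi_{n-1}\}$. Let the colour record $n$, the values $s(\xi_{i}),t(\xi_{i})$, and also $f(s\res\xi_{i})$. Suppose $(s,t)$ and $(s',t')$ get the same colour and $s\neq s'$.
\begin{itemize}
\item If $\xi_{i}\le\gamma$ and $\xi'_{i}\neq\xi_{i}$, then $s\res\xi_{i}$ and $s'\res\xi'_{i}$ are distinct comparable nodes with the same $f$-value, which is impossible; the symmetric statement holds too.
\item So the two difference sets agree up to $\gamma$, with the same $t$-values there.
\item $\gamma$ lies in neither set, since otherwise the recorded values would give $s(\gamma)=s'(\gamma)$.
\item Hence $\Delta(t,t')=\gamma$, with $t(\gamma)=s(\gamma)$ and $t'(\gamma)=s'(\gamma)$, so the two pairs are comparable.
\end{itemize}
In the finite-to-one case, a counting invariant such as $|\{\eta<\xi_{i} : s(\eta)\le\max(s(\xi_{i}),t(\xi_{i}))\}|$ does the same job. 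The case analysis differs slightly when $\gamma$ is a difference point. Pairs of different heights need further bookkeeping, which the cited proofs carry out.
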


For a proof of the finite-to-one case see
\cite{TodorcevicCoherentSeqs}*{Theorem 2.5}, and for
the special case see \cite{TodorcevicWalks}*{Theorem 4.4.1}.
Also \cite{Peng2017}*{Corollary 1} handles both at the same time.

Finally the following makes the connection with club-isomorphisms,
the proof is standard but somewhat tedious, a written proof (for the increasing
case) appears in \cite{Shalev2024}*{Lemma 5.2}, which in turns
cites \cite{PengThesis}.

\begin{fact}\label{lem:club-iso-embedding}
  Suppose that $T$ and $S$ are lexicographically ordered
  Hausdorff Aronszajn tree. If there is
  a $<_{\lex}$-increasing or $<_{\lex}$-decreasing map from
  an uncountable subset of $T$ into $S$, then some
  subtree of $T$ club-embeds into $S$.
\end{fact}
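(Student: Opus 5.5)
We are given an uncountable $X\seb T$ and a map $g:X\to S$ that is $<_{\lex}$-increasing (the decreasing case follows by reversing the lexicographic order of $S$, i.e.\ reversing the order on each set of immediate successors, which again gives a lexicographically ordered Hausdorff Aronszajn tree). The goal is to produce a subtree $U$ of $T$, a club $C$, and a height-preserving tree embedding $U\res C\to S$. The plan is to show that $g$ "respects meets" on a large part of $X$, and then to read off the embedding by taking restrictions.

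First we thin out $X$. Since $T$ is Aronszajn, we may assume, by passing to an uncountable subset, that $X$ meets every level at most once and that $\hht(x)<\hht(x')$ implies $\hht(g(x))<\hht(g(x'))$. Next take a countable elementary submodel chain $M_\delta$ containing $T,S,X,g$, and let $C$ be the club of $\delta=M_\delta\cap\omega_1$. The key claim is the following. For $\delta\in C$ and $x\in X$ with $\hht(x)\ge\delta$, the node $g(x)\res\delta$ depends only on $x\res\delta$, and $\hht(g(x))\ge\delta$.

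The height part is easy: elements of $X$ in $M_\delta$ have images of height $<\delta$, and by elementarity the images cofinally leave every level below $\delta$. For the dependence part, suppose $x,y\in X$ have $x\res\delta=y\res\delta$ but $g(x)\res\delta\ne g(y)\res\delta$. Say $g(x)<_{\lex}g(y)$ split below $\delta$ at a level $\xi<\delta$. The Aronszajn property and Hausdorffness imply that $(T,<_{\lex})$ has the following property: the set of $z\in X$ with $z>_{\lex}$ some node is "interval-like". The intended move is to use elementarity to find $z\in X\cap M_\delta$ lying $<_{\lex}$-between $x$ and $y$, or on one side of both, with $g(z)$ on the wrong side. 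Concretely, the set $\{z\in X: g(z)\res(\xi+1)$ lies strictly between the two splitting nodes, or equals one of them$\}$ is definable in $M_\delta$. Its $<_{\lex}$-position relative to $x\res\delta$ yields, via elementarity and the fact that $x\res\delta$ is determined by a countable branch through $M_\delta$, an element $z\in M_\delta$ extending a node below $x\res\delta$ arbitrarily high. Monotonicity then forces a contradiction.

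This step is the main obstacle. One must handle carefully the case where $x\res\delta$ is not in $M_\delta$-definable position, and this is where Hausdorffness at limits is needed, so that $x\res\delta$ is determined by its predecessors. The natural fallback is a pressing-down/$\Delta$-system argument: the splitting level $\xi(x,y)<\delta$ is regressive on a stationary set, so it is constant on a stationary set. This yields uncountably many pairs whose images split at the same fixed node while the domains agree arbitrarily high, and using that every uncountable subset of an Aronszajn tree contains an uncountable antichain, this produces an uncountable chain of pairs that violates monotonicity.

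Given the claim, we let $U$ be the downward closure of $\{x\res\delta: x\in X, \delta\in C, \hht(x)\ge\delta\}$; this is a subtree. For $t\in U\res C$ we define $f(t)=g(x)\res\hht(t)$ for any $x\in X$ extending $t$. By the claim, $f$ is well defined and height preserving. It is order preserving because restrictions commute. It is injective on each level because $g$ is strictly increasing: if $t\ne t'$ at level $\delta$ had $f(t)=f(t')$, then choosing witnesses and applying the same argument at level $\delta$ gives a contradiction with $<_{\lex}$-monotonicity. Hence $f$ is a tree embedding of $U\res C$ into $S$, as required.
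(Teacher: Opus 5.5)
\textbf{There is a genuine gap: the key claim is never proved.} Your overall architecture is the standard one: a club of $M_\delta\cap\omega_1$'s, the claim that $g(x)\res\delta$ depends only on $x\res\delta$, and then $f(t)=g(x)\res\hht(t)$. (The paper gives no proof of this fact and only cites Shalev and Peng.) The bookkeeping after the claim is fine, but the claim itself is the whole content of the fact, and you mark it as the main obstacle without resolving it.

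Your primary route cannot work as described. Cones are $<_{\lex}$-convex, so if $x\res\delta=y\res\delta$, every $z$ lying $<_{\lex}$-between $x$ and $y$ extends $x\res\delta$ and hence is not in $M_\delta$. The remaining talk of ``interval-like'' sets and ``$M_\delta$-definable position'' is not an argument. Your fallback points in the right direction but misstates what is needed:
\begin{itemize}
\item It is false that every uncountable subset of an Aronszajn tree contains an uncountable antichain; a Suslin tree is a counterexample, and the fact is stated for arbitrary Hausdorff Aronszajn trees.
\item You need a single incomparable pair, not an ``uncountable chain of pairs''.
\item Fixing the splitting level or node is not enough; you must fix the two splitting nodes themselves.
\end{itemize}

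\textbf{How to close the gap.} For $\delta$ in your club, call $\delta$ bad if some $x,y\in X$ of height $\ge\delta$ satisfy $x\res\delta=y\res\delta$ but $g(x)\res\delta\neq g(y)\res\delta$. Suppose the bad set is stationary.
\begin{itemize}
\item For each bad $\delta$, pick witnesses $x_\delta,y_\delta$ labelled so that $g(x_\delta)<_{\lex}g(y_\delta)$, and put $t_\delta:=x_\delta\res\delta$.
\item Since $S$ is Hausdorff and $\delta$ is a limit, the first level $\zeta_\delta$ at which $g(x_\delta)$ and $g(y_\delta)$ differ is $<\delta$. Let $u_\delta<_{\lex}v_\delta$ be their incomparable restrictions to that level.
\item By pressing down and the countability of the levels of $S$, there is a stationary $\Gamma$ on which $(u_\delta,v_\delta)=(u,v)$ is constant.
\item The set $\{t_\delta:\delta\in\Gamma\}$ is uncountable, hence not a chain. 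Choose $\delta\neq\delta'$ in $\Gamma$ with $t_\delta\perp t_{\delta'}$ and $t_\delta<_{\lex}t_{\delta'}$.
\item Then $y_\delta<_{\lex}x_{\delta'}$, so $g(y_\delta)<_{\lex}g(x_{\delta'})$. But $g(x_{\delta'})$ extends $u$ and $g(y_\delta)$ extends $v$, which forces $g(x_{\delta'})<_{\lex}g(y_\delta)$, a contradiction.
\end{itemize}
So the bad set is nonstationary, and you may shrink the club.

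In your $M_\delta$ language the same argument reads as follows. The set of nodes whose cone meets both $\{z\in X: g(z)\text{ extends }u\}$ and $\{z\in X: g(z)\text{ extends }v\}$ is a chain, by the same comparison. It belongs to $M_\delta$, so it is countable and contained in $M_\delta$. Yet it contains $x\res\delta$, which has height $\delta$.

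\textbf{Two smaller repairs.} Level-injectivity of $f$ follows by applying the claim to $g^{-1}$, which is also $<_{\lex}$-increasing; it is not literally ``the same argument at level $\delta$''. Also, reversing the successor orders of $S$ reverses $<_{\lex}$ only on tree-incomparable pairs, not on comparable ones, so $g$ does not literally become increasing. This is harmless, because the argument only ever compares tree-incomparable elements of $S$, but your reduction of the decreasing case should say so.
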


From this we conclude the following.

\begin{proposition}\label{prop:basis}
  If $\T$ is a family of suitable trees such that for all distinct
  $T,S \in \T$, $T$ and $S$ have no club-isomorphic subtrees,
  then $\{(T,<_{\lex}),(T,<_{\lex}^{*}) : T \in \T\}$
  is a family of pairwise far
  Countryman lines, and thus any basis for the Countryman lines
  has size at least $2|\T|$.
\end{proposition}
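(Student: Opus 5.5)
The plan is to reduce everything to \zcref{lem:club-iso-embedding}. By the second Fact, each $(T,<_{\lex})$ with $T \in \T$ is a Countryman line. Its reverse $(T,<_{\lex}^{*})$ is then also Countryman, because $C^{\star}\times C^{\star}$ is covered by the same chains as $C \times C$. Being far is symmetric. Moreover, reversing both orders turns an isomorphism into an isomorphism and a reverse-isomorphism into a reverse-isomorphism. So it suffices to prove two things for every pair of lines in the family: they contain no uncountable isomorphic suborders, and they contain no uncountable reverse-isomorphic suborders.

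\emph{Pairs built from one tree.} The pair $(T,<_{\lex})$, $(T,<_{\lex}^{*})$ is far because it has the form $C, C^{\star}$ with $C$ Countryman, and the introduction recalls that such a pair is always far.

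\emph{Pairs built from distinct trees.} Let $T \neq S$ in $\T$. By the reductions above, it is enough to show that there is no $<_{\lex}$-increasing and no $<_{\lex}$-decreasing map from an uncountable subset of $T$ into $S$. Indeed, an uncountable suborder of $(T,<_{\lex})$ that is isomorphic or reverse-isomorphic to a suborder of $(S,<_{\lex})$ or of $(S,<_{\lex}^{*})$ gives exactly such a monotone map.

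To apply \zcref{lem:club-iso-embedding}, I need $T$ and $S$ to be Hausdorff Aronszajn trees. Aronszajn is the first Fact. Hausdorff holds since they are subtrees of $\omega^{<\omega_{1}}$: nodes at limit heights are determined by their restrictions. The Fact then produces a subtree $T'$ of $T$ and a club $C$ with a height-preserving embedding $f : T'\res C \to S$.

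Next I convert this embedding into club-isomorphic subtrees. Let $S'$ be the downward closure in $S$ of $\ran(f)$. Then $S'$ is an uncountable downward closed subset of $S$, hence a subtree. Since $f$ is height preserving, $\ran(f) = S'\res C$. Thus $f$ is a club-isomorphism between $T'$ and $S'$, after shrinking $T'$ so that its downward closure matches. The paper already observes this equivalence: "$T$ and $S$ have club-isomorphic subtrees iff some subtree of $T$ club-embeds into $S$". This contradicts the hypothesis on $\T$.

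\emph{Basis bound.} A basis $\mathcal{B}$ for the Countryman lines must contain, for each of the $2|\T|$ lines $L$ above, some $B_L \in \mathcal{B}$ embedding into $L$. Since $B_L$ is Countryman, it is uncountable. If $B_L = B_{L'}$ for distinct lines $L, L'$, then $L$ and $L'$ would share an uncountable isomorphic suborder, contradicting farness. So $L \mapsto B_L$ is injective and $|\mathcal{B}| \ge 2|\T|$.

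\emph{Expected obstacle.} The main obstacle is only the bookkeeping in applying \zcref{lem:club-iso-embedding}. This means checking its hypotheses, namely that suitable trees are Hausdorff, and passing from a club-embedding to club-isomorphic subtrees. The latter is supplied by the remark already made in the paper.
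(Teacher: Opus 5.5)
Your proposal is correct and is essentially the argument the paper intends; the paper states the proposition as an immediate consequence of the preceding facts. As you do, it uses the second Fact for Countryman-ness, farness of $C$ and $C^{\star}$ for pairs from the same tree, \zcref{lem:club-iso-embedding} together with the observation that a club-embedding of a subtree yields club-isomorphic subtrees for pairs from distinct trees, and injectivity of $L \mapsto B_L$ for the basis bound. The only slip is wording: your claim that it suffices to rule out reverse-isomorphic suborders ``for every pair of lines'' is false for the pair $(T,<_{\lex}),(T,<_{\lex}^{*})$, but your case split handles that pair correctly using farness alone.
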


Our task then, is to construct models of $\MA_{\omega_{1}}$
where there are large families as above. For this we need two things,
to be able to construct suitable trees without club-isomorphic subtrees,
and to preserve this.

In the model of \cite{ShelahAbraham}, they iterate ccc forcings while
preserving a property on a single Aronszajn trees
$T$, which implies that no two
distinct cones of $T$ have club-isomorphic subtrees. We cannot do the
same thing and expect $T$ to be coherent;
$\MA_{\omega_{1}}$ implies that such trees are club minimal,
in the sense that they club-embed into all its subtrees \cite{Todorcevic2007}.

Fix $T$ and $S$ suitable trees. It is not hard to see that $T$ and $S$ have no
club-isomorphic subtrees iff the following holds: for every club $C$
and uncountable $A \seb T \otimes S$, there are $a,b \in A$ such that
$a(0) \perp b(0)$, $a(1) \perp b(1)$ and $\nu_{C}(\Delta(a(0),b(0)))
\neq \nu_{C}(\Delta(a(1),b(1)))$.
Imitating \cite{ShelahAbraham}, we could work with just this idea
to obtain a model of $\MA_{\omega_{1}}$ + ``there are suitable
trees $T$ and $S$ without club-isomorphic subtrees'', thus obtaining
a model of $\MA_{\omega_{1}}$ in which no basis for the Countryman lines
has less than $4$ elements. However, we will need a slightly more general
version to increase the cardinality of the basis

Let $T^{0},\dots,T^{n-1}$ be Hausdorff $\omega_{1}$ trees.
For a club $C$ and $a,b \in T^{0} \otimes \cdots \otimes T^{n-1}$, we say that
$a$ and $b$ \emph{conflict on $C$} (or are conflicting on $C$)
if for every $i < n$, $a(i) \perp b(i)$,
and for every $i < j < n$,
and $\nu_{C}(\Delta(a(i),b(i))) \neq \nu_{C}(\Delta(a(j),b(j)))$.
We will work with the following statements which generalize our previous
discussion:

\begin{itemize}[topsep=1em,align=left]
  \item[$\prop(T^{0},\dots,T^{n-1})$]
  For every club $C$, and
  for every uncountable $A \seb \otimes_{i < n}T^{i}$, there are
  $a,b \in A$ conflicting on $C$.
  \item[$\propp(T^{0},\dots,T^{n-1})$]
  For every club $C$, and
  for every uncountable $A \seb \otimes_{i < n}T^{i}$, there is an
  uncountable $B \seb A$ such that the elements of $B$ pairwise conflict
  on $C$.
\end{itemize}

\begin{definition}
  Let $\T$ be a family of $\omega_{1}$-trees. By $\prop(\T)$ we mean
  that $\prop(T_{0},\dots,T_{n-1})$ holds whenever $T_{0},\dots,T_{n-1} \in \T$
  are distinct. $\propp(\T)$ is defined analogously.
\end{definition}

If $\prop(\T)$ holds, then any two trees in $\T$ have no club-isomorphic
subtrees, and thus by \zcref{prop:basis}, if $\T$
consists of suitable trees, any basis for
the Countryman lines has size at least $2|\T|$. It is clear that
$\prop(\T)$ is formally stronger than ``the elements of $\T$ pairwise contain
no club-isomorphic subtrees''. This will be important for
proving the ccc of certain posets.

We have two tasks, to obtain models satisfying $\prop(\T)$ for a family of
suitable trees with cardinality as large as possible, and then
to iterate ccc forcings preserving $\prop$.

\section{Non-dangerous forcings}\label{sec:non-dangerous}

This section closely follows Abraham-Shelah's \cite{ShelahAbraham}*{Section 6},
so familiarity with it will make it easier to follow.

For this section fix a nonempty family $\T$ of suitable trees.
As a convention, we will let $\mathcal{A}(\T)$
denote the set of $A$ such that $A \seb \bigotimes_{i <
  n}T^{i}$ for some $1 \le n < \omega$ and distinct
$T^{0},\dots,T^{n-1}\in \T$.

\begin{definition}\label{df:safe}
  A forcing poset $\P$ is called
  \emph{$\T$-dangerous} if for some club $C$ and
  binary relation $R$ such that
  $\dom(R) \seb \P$ and $\ran(R)$ is an uncountable
  member of $\mathcal{A}(\T)$,
  whenever $(p,a),(p',a') \in R$ and $p$ is compatible
  with $p'$, $a$ does not conflict with $a'$ on $C$.
  We say that $\P$ is $\T$-safe if it is not $\T$-dangerous.
\end{definition}

Observe that $\T$-safe implies being ccc.

\begin{lemma}\label{lem:safe-preserves}
  Assume $\prop(\T)$. If $\P$ is any $\T$-safe
  forcing, then $\P \forces \prop(\T)$.
\end{lemma}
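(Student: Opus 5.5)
The plan is to argue by contraposition: assuming $\P$ does not force $\prop(\T)$, we produce a witness to $\P$ being $\T$-dangerous.

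Suppose some $p_0 \in \P$ forces the failure of $\prop(\T)$. Since $\T$-safe posets are ccc, they preserve $\omega_1$, so the trees in $\T$ remain $\omega_1$-trees and $\mathcal{A}(\T)$ makes sense in the extension. Because $\P$ is ccc, every club in the extension contains a ground-model club. Conflicting on a smaller club $D \seb C$ is harder than conflicting on $C$: if $\nu_D(\xi) \neq \nu_D(\eta)$ for $\xi<\eta$, then some point of $D$ lies in $(\xi,\eta]$, so $\nu_C(\xi)\ne\nu_C(\eta)$. Hence we may strengthen $p_0$ and fix, in the ground model, a club $C$, finitely many distinct $T^0,\dots,T^{n-1}\in\T$, and a $\P$-name $\dt{A}$ such that
\[
p_0 \forces \dt{A}\seb \textstyle\bigotimes_{i<n}T^i \text{ is uncountable and no two elements of } \dt{A} \text{ conflict on } \ck{C}.
\]

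Next we define the relation $R = \{(p,a) : p \le p_0,\ p \forces \ck a \in \dt A\}$. Its range $A_0 = \{a : \exists p \le p_0\, (p \forces a \in \dt A)\}$ lies in the ground model and is a subset of $\bigotimes_{i<n}T^i$. It is uncountable, because $p_0$ forces $\dt A \seb \ck A_0$ and $\dt A$ is forced to be uncountable. So $A_0 \in \mathcal{A}(\T)$.

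Now take $(p,a),(p',a') \in R$ with $p$ and $p'$ compatible, and let $q$ be a common extension. Then $q \le p_0$ and $q$ forces both $a$ and $a'$ into $\dt A$. Since the conflict relation is absolute, $a$ and $a'$ do not conflict on $C$; otherwise $q$ would force two conflicting elements in $\dt A$, contradicting the choice of $p_0$. Thus $C$ and $R$ witness that $\P$ is $\T$-dangerous, which is the contradiction we want.

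Surprisingly, the hypothesis $\prop(\T)$ in the ground model is not needed for the main argument. It only matters in the degenerate case: it guarantees that the dangerousness definition is not vacuous, and that $\prop(\T)$ is not already false before forcing.

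The main obstacle is the club reduction. We need that every club in the extension contains a ground-model club, which uses the ccc of $\P$, which in turn is implied by safety. We also need monotonicity: conflict on a subclub implies conflict on $C$. I will verify this via the observation above about $\nu$, namely that $\nu_D(\xi)\ne\nu_D(\eta)$ forces an element of $D \seb C$ into $(\xi,\eta]$. The remaining steps, absoluteness of incomparability and of $\Delta$, are routine.
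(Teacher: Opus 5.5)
Your proof is correct and follows the paper's argument: you use the ccc to replace $\dt{C}$ by a ground-model club (conflicting on a subclub implies conflicting on the larger club), and then build a relation $R$ from conditions forcing tuples into $\dt{A}$, which witnesses that $\P$ is $\T$-dangerous. The only cosmetic difference is that you take the full relation $\{(p,a) : p \le p_0,\ p \forces \ck{a} \in \dt{A}\}$, where the paper picks one pair $(p_\xi,a_\xi)$ with $\hht(a_\xi)\ge\xi$ for each $\xi<\omega_1$. Your side remark is also right: $\prop(\T)$ is never used, since if it failed then every poset would be $\T$-dangerous via $R=\{(1_{\P},a) : a\in A\}$.
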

\begin{proof}
  Suppose the conclusion fails. We will show that $\P$ is not $\T$-safe.
  Since $\P \not\forces \prop(\T)$, for
  some $p \in \P$, names $\dt{A}$ and $\dt{C}$, and distinct $T_{0},\dots,
  T_{n-1} \in \T$, we have that
  $p$ forces $\dt{A} \seb T_{0} \otimes \dots \otimes T_{n-1}$
  is uncountable, $\dt{C}$ is a club,
  and that no two elements of $\dt{A}$ conflict on $\dt{C}$. Since $\P$ is
  ccc, we may assume that $\dt{C}$ is a club in the ground model,
  call it $C$ (conflicting in a coarser club implies conflicting in the
  finer).

  For each $\xi < \omega_{1}$, one finds $p_{\xi} \le p$ and
  $a_{\xi} \in T_{0}\otimes \dots \otimes T_{n-1}$ of height no less than
  $\xi$, such that $p_{\xi}\forces a_{\xi} \in \dt{A}$. Let
  $R := \{(p_{\xi},a_{\xi}) : \xi < \omega_{1}\}$. It should be clear
  that $\ran(R)$ is an uncountable subset of
  $T_{0}\otimes \dots \otimes T_{n-1}$, and that whenever $p_{\xi}$ and $p_{\eta}$
  are compatible, $a_{\xi}$ and $a_{\eta}$ do not conflict on $C$.
  So $R$ witnesses that $\P$ is not $\T$-safe.
\end{proof}

\begin{remark}\label{rmk:converse-if-ccc}
  Note that if we assume that $\P$ is ccc, then the converse also holds.
\end{remark}

The following is clear, but important.

\begin{lemma}\label{lem:psi+-safe}
  Assume $\propp(\T)$. Every ccc forcing
  is $\T$-safe.
\end{lemma}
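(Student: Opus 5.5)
We argue by contradiction: we assume $\propp(\T)$, suppose $\P$ is ccc and $\T$-dangerous, and produce an uncountable antichain in $\P$.

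First fix witnesses. Let $C$ be a club and $R$ a relation as in \zcref{df:safe}. Then $\dom(R)\seb\P$ and $A := \ran(R)$ is an uncountable member of $\mathcal{A}(\T)$, say $A \seb \bigotimes_{i<n}T^{i}$ with $T^{0},\dots,T^{n-1}\in\T$ distinct. For each $a \in A$ choose one $p_{a}$ with $(p_{a},a)\in R$.

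Next apply $\propp(T^{0},\dots,T^{n-1})$ to the club $C$ and the uncountable set $A$. This gives an uncountable $B \seb A$ whose elements pairwise conflict on $C$.

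Now take distinct $a,b\in B$. Since $a$ and $b$ conflict on $C$, the defining property of $R$ forbids $p_{a}$ and $p_{b}$ from being compatible. So $\{p_{a} : a\in B\}$ consists of pairwise incompatible conditions. The map $a\mapsto p_{a}$ is injective on $B$: if $p_{a}=p_{b}$ with $a\neq b$, then $p_{a}$ would be compatible with $p_{b}$ (as any condition is with itself), and we just ruled that out. Hence we have an uncountable antichain in $\P$, which contradicts the ccc. Therefore $\P$ is $\T$-safe.

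There is no real obstacle here. The only delicate point is that the case $p_{a}=p_{b}$ must also be excluded, and it is, because the dangerous-relation condition applies to all pairs of compatible conditions, including equal ones. In the case $n=1$, conflicting reduces to incomparability, and the argument goes through unchanged.
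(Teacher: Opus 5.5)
Your proof is correct and is exactly the intended argument. The paper states this lemma as ``clear'' without proof, and the reasoning behind it is what you wrote: $\propp(\T)$ yields an uncountable pairwise-conflicting $B \seb \ran(R)$, and the dangerous condition then turns the chosen witnesses $\{p_{a} : a \in B\}$ into an uncountable antichain, including your correct handling of the case $p_{a} = p_{b}$.
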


The following says that we can iterate $\T$-safe posets. The argument
is very similar to the classical proof that the ccc is preserved under
finite support iterations. The analogous
in \cite{ShelahAbraham} is given without proof so we give a sketch.

\begin{lemma}\label{lem:safe-iterations}
  Assume $\prop(\T)$.
  Being $\T$-safe is preserved under finite support iterations.
\end{lemma}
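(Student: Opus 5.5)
We prove two things: the two-step case $\P * \dot{\Q}$, where $\P$ is $\T$-safe and $\P \forces$ ``$\dot{\Q}$ is $\T$-safe'', and the limit case, where $\P_\delta$ is the finite support limit of $\T$-safe iterands. In each case we argue by contradiction. Suppose $R$ and $C$ witness that the iteration is $\T$-dangerous. Thinning out, we may assume $R = \{(p_\xi, a_\xi) : \xi < \omega_1\}$, where the $a_\xi$ are pairwise distinct elements of some $\bigotimes_{i<n} T^i$ with $\hht(a_\xi) \ge \xi$.

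For the two-step case, write $p_\xi = (q_\xi, \dot r_\xi)$ and let $G$ be $\P$-generic. Consider
\[
A_G = \{a_\xi : q_\xi \in G\}.
\]
First we show that some condition forces $A_G$ to be uncountable. If not, $\P$ forces $A_G$ to be bounded by some name $\dot\eta$. By the ccc of $\P$ (safe implies ccc), $\dot\eta$ is bounded by a ground-model $\eta$. Then $q_\xi \forces \xi \le \eta$ for every $\xi > \eta$, which is absurd.

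So fix $q$ forcing $A_G$ uncountable. In $V[G]$, define
\[
R' = \{(\dot r_\xi^G, a_\xi) : q_\xi \in G\}.
\]
Its range is uncountable. If $\dot r_\xi^G$ and $\dot r_\eta^G$ are compatible in $\dot{\Q}^G$, then $p_\xi$ and $p_\eta$ are compatible in $\P * \dot{\Q}$, so $a_\xi$ and $a_\eta$ do not conflict on $C$. Since $C$ is still a club in $V[G]$, $R'$ witnesses that $\dot{\Q}^G$ is $\T$-dangerous below $q$. This contradicts the hypothesis. Note that in $V[G]$ we must interpret $\mathcal A(\T)$ with the same trees. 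They remain suitable by the Fact, and we need $\prop(\T)$ to hold in $V[G]$ only so that the hypothesis on $\dot{\Q}$ is meaningful; \zcref{lem:safe-preserves} supplies this.

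For the limit stage $\delta$, we use the standard $\Delta$-system argument. Apply the $\Delta$-system lemma to the supports $\operatorname{supp}(p_\xi)$, obtaining an uncountable $I$ and a root $r$. If $\operatorname{cf}(\delta) > \omega$, then $r \subseteq \gamma$ for some $\gamma < \delta$ large enough to contain all supports. Restricting to $\P_\gamma$ gives $\{(p_\xi \res \gamma, a_\xi) : \xi \in I\}$. By the induction hypothesis $\P_\gamma$ is safe, so some $\xi \neq \eta$ in $I$ have $p_\xi \res \gamma$ and $p_\eta \res \gamma$ compatible while $a_\xi, a_\eta$ conflict. Since the supports above $\gamma$ are disjoint, $p_\xi$ and $p_\eta$ are then compatible, which is a contradiction.

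If $\operatorname{cf}(\delta) = \omega$, the step needs more care, and we expect it to be the main obstacle. The root $r$ is finite, so $r \subseteq \gamma$ for some $\gamma < \delta$. The remaining parts of the supports, $\operatorname{supp}(p_\xi) \setminus r$, are pairwise disjoint, but they need not lie above $\gamma$. We handle this by shrinking $\gamma$: take $\gamma = \max(r) + 1$. The $\Delta$-system property gives that $\operatorname{supp}(p_\xi) \cap \gamma$ is only guaranteed to be $r$ together with pairwise disjoint leftovers. Compatibility of $p_\xi$ and $p_\eta$ then follows from compatibility on the root coordinates alone, since on the disjoint coordinates we can amalgamate freely coordinate by coordinate, extending through the iteration.

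So the argument reduces to the following claim: if $p_\xi \res \gamma$ and $p_\eta \res \gamma$ are compatible and their supports meet only in $r$, then $p_\xi$ and $p_\eta$ are compatible. This is the usual lemma in the proof that finite support iterations of ccc posets are ccc, and it applies verbatim. Hence safety of $\P_\gamma$ yields the contradiction, and by induction on the length of the iteration every stage is $\T$-safe.
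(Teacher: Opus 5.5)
Your proof is correct and follows the paper's approach: the two-step case is the same argument (choose a generic $G$ for which $\{\xi : q_\xi \in G\}$ is uncountable, apply the $\T$-safety of $\dot{\Q}^G$ in $V[G]$, and pull the compatibility back to $\P * \dot{\Q}$), and the limit case is the standard $\Delta$-system argument that the paper only alludes to. In the limit case no cofinality split is needed, and two of your side remarks are false: when $\operatorname{cf}(\delta)=\omega_1$ the supports need not be bounded below $\delta$, and compatibility in an iteration does not reduce to the root coordinates alone, since coordinates are names depending on earlier ones. What you actually use is the final claim, for any $\gamma<\delta$ with $r\subseteq\gamma$, together with the induction hypothesis that $\P_\gamma$ is $\T$-safe; that claim is correct and handles every limit uniformly.
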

\begin{proof}
  The proof is by induction on the length of the iteration.
  The limit case follows from the induction hypothesis exactly as in
  the proof for the preservation of the ccc. We prove the successor step.
  
  Suppose $\P$ is $\T$-safe and that $\dt{\Q}$ is a $\P$-name such that
  $\P \forces$ ``$\dt{\Q}$ is $\T$-safe''. Let
  $C$ be a club, and $R$ be such that $\dom(R) \seb \P*\dt{\Q}$
  and $\ran(R)$ is an uncountable member of $\mathcal{A}(\T)$.
  Let $\tup{((p_{\xi},\dt{q}_{\xi}),a_{\xi} ) : \xi < \omega_{1}}$
  enumerate a subset of $R$ satisfying that the $a_{\xi}$ are all distinct.
  We need to show that there are $\xi,\eta < \omega_{1}$ such that
  $(p_{\xi},\dt{q}_{\xi})$ and $(p_{\eta},\dt{q}_{\eta})$ are compatible,
  and $a_{\xi}$ and $a_{\eta}$ are conflicting on $C$.

  Let
  $\dt{\Gamma} := \{(\ck{\xi},p_{\xi}) : \xi < \omega_{1}\}$. Then
  the fact that $\P$ is ccc implies that $\P \not\forces$ ``$\dt{\Gamma}$
  is countable''. Thus for some  generic $G$ for $\P$,
  $\Gamma := \dt{\Gamma}[G]$ is
  uncountable. Since $\P \forces$ ``$\dt{\Q}$ is $\T$-safe'',
  there are $\xi \neq \eta$ in $\Gamma$
  such that $\dt{q}_{\xi}[G] \not\perp \dt{q}_{\eta}[G]$
  and $a_{\xi}$ conflicts with $a_{\eta}$ in $C$.
  Thus for some  $p' \in G$ and $\dt{q}'$,
  $p' \forces \dt{q}' \le \dt{q}_{\xi},\dt{q}_{\eta}$. Since $p_{\xi}$,
  $p_{\eta}$ and $p'$ are in $G$, we may assume that
  $p' \le p_{\xi},p_{\eta}$. Thus $(p',\dt{q}')$ witnesses that
  $(p_{\xi},\dt{q}_{\xi})$ and $(p_{\eta},\dt{q}_{\eta})$ are compatible.
\end{proof}

\begin{remark}
  Observe that in the previous proof we do not use that $\P$ is
  $\T$-safe, only that $\P$ is ccc. However its use is hidden:
  if $\P$ is not
  $\T$-safe, then $\P \not\forces \prop(\T)$ by
  \zcref{rmk:converse-if-ccc}, so
  $\P \not\forces$ ``$\dt{\Q}$ is $\T$-safe''.
\end{remark}

We now want show that given a family $\T$ of suitable trees
satisfying $\prop(\T)$, a club $C$, and $A \in \mathcal{A}(\T)$, there
is a $\T$-safe forcing introducing an uncountable $B \seb A$
consisting of elements which pairwise conflict on $C$.

First we need another fact about suitable trees
(see \cite{TodorcevicWalks} for a proof).

\begin{lemma}\label{lem:tuples}
  Suppose $T$ is coherent and every uncountable subset of $T$
  contains an uncountable antichain. Then for every $1 \le n < \omega$
  and uncountable $A \seb \bigotimes_{i < n}T$, 
  there is an uncountable $B \seb A$
  such that for all distinct $a,b \in B$, every element
  of $a$ is incomparable with every element of $b$, and for every
  $i,j < n$, $\Delta(a(i),b(i)) = \Delta(a(j),b(j))$.
\end{lemma}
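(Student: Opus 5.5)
We prove \zcref{lem:tuples} by a $\Delta$-system thinning, a pressing-down argument, and coherence. Since we may shrink $A$ freely, assume its elements have pairwise distinct heights $\hht(a) = \alpha_a$, so $a(i) \in T_{\alpha_a}$ for all $i<n$. For each $a \in A$ the finite set $F_a := \{\Delta(a(i),a(j)) : i<j<n,\ a(i)\neq a(j)\}$ is a finite subset of $\alpha_a$. Pass to an uncountable subfamily on which these sets form a $\Delta$-system with root below some $\delta$, and (by pressing down / counting) the restrictions $a(i)\res\delta$ are the same for all $a$. Enlarging $\delta$ slightly, we may also assume that the finitely many coordinates split from each other below $\delta$ in a fixed pattern, and that for $a \neq b$ we have $a(i)\res\delta = b(i)\res\delta$.

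Next, coherence. All $a(i)$ for $i<n$ lie in $T_{\alpha_a}$, so they differ from one another on only finitely many coordinates, and in fact on a finite subset of $\delta$ after the thinning. So put $d_a(i) := \{\xi : a(i)(\xi)\neq a(0)(\xi)\}$. These are finite sets, and by thinning we may take them constant, equal to $d(i) \seb \delta$, with the values $a(i)\res d(i)$ also constant. Thus $a(i)$ is obtained from $a(0)$ by a fixed finite modification $\sigma_i$ supported below $\delta$.

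Now apply the antichain hypothesis to the uncountable set $\{a(0) : a\in A\}$ (elements above a fixed node of level $\delta$). This yields an uncountable $B$ such that $a(0)\perp b(0)$ for distinct $a,b\in B$. Then $\Delta(a(0),b(0)) \ge \delta$, since both extend the same node at level $\delta$. Because $a(i)$ and $a(0)$ agree above $\delta$, and the modifications $\sigma_i$ are identical for $a$ and $b$, we get $\Delta(a(i),b(i)) = \Delta(a(0),b(0))$ for every $i$. The reason is that for $\xi\ge\delta$, $a(i)\res\xi = b(i)\res\xi$ iff $a(0)\res\xi = b(0)\res\xi$.

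The main obstacle is the cross incomparability: we need $a(i)\perp b(j)$ for $i\neq j$, not just $i=j$. Suppose $a(i)\le b(j)$ with $\hht(a)<\hht(b)$. Then $a(i) = b(j)\res\alpha_a$. Since $b(j)\res\alpha_a$ and $b(0)\res\alpha_a$ differ exactly by $\sigma_j$, and $a(i)$ differs from $a(0)$ by $\sigma_i$, we get $a(0) = b(0)\res\alpha_a$ modified by $\sigma_j\sigma_i^{-1}$ below $\delta$. Comparing the two restrictions at $\delta$, which are equal, forces that modification to be trivial on the relevant node, so $a(0)\le b(0)$, contradicting the antichain property. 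If $\sigma_i$ and $\sigma_j$ differ, then $a(i)\res\delta \neq b(j)\res\delta$ directly, since $a(i)\res\delta = b(i)\res\delta \neq b(j)\res\delta$ when the coordinates are distinct below $\delta$. Either way incomparability follows. The case where some $a(i)=a(j)$ is handled the same way: equal coordinates yield equal $\Delta$'s trivially, and cross incomparability then reduces to $a(i)\perp b(i)$. I expect this bookkeeping, choosing $\delta$ large enough that all coordinate distinctions appear below it, to be the only delicate point.
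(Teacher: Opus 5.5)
\textbf{There is a genuine gap, in your second paragraph.} You assert that after thinning, the coordinates of each $a$ differ from one another only on a fixed finite subset of $\delta$, so that $a(i)$ is a fixed modification $\sigma_i$ of $a(0)$ supported below $\delta$. Nothing in your first paragraph produces this, and it is false in general.

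The $\Delta$-system lemma only makes the \emph{root} constant. The non-root parts are pairwise disjoint, so all but countably many of them lie entirely above any fixed countable $\delta$. The same objection applies to ``enlarging $\delta$'' for the split sets $F_a$. Moreover, even if every split $\Delta(a(i),a(j))$ were below $\delta$, the finite set where $a(i)$ and $a(0)$ disagree can contain points far above the first split.

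Here is a concrete case. Let $T$ be closed under finite modifications, $t_\alpha\in T_{\alpha+1}$, and $a_\alpha=(t_\alpha,t'_\alpha)$, where $t'_\alpha$ is $t_\alpha$ with its value at $\alpha$ changed. Then $d_{a_\alpha}(1)=\{\alpha\}$, so no uncountable subfamily has constant $d(1)$.

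This is not a bookkeeping issue. If you apply the antichain hypothesis to the full nodes $a(0)$, the split $\zeta=\Delta(a(0),b(0))$ can lie above some $\xi\in d_a(i)\setm d_b(i)$. Then $\Delta(a(i),b(i))\le\xi<\zeta$. In the example, suppose $\alpha<\beta$, $t_\alpha\res\alpha=t_\beta\res\alpha$ and $t'_\alpha(\alpha)=t_\beta(\alpha)$. Then $t_\alpha\perp t_\beta$ but $t'_\alpha\le t'_\beta$, so even same-coordinate incomparability fails, and your construction does not exclude this.

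\textbf{The missing idea} is to apply the antichain hypothesis to restrictions of $a(0)$ taken below the high disagreements.
\begin{enumerate}
\item Choose distinct $a_\nu\in A$ with $\hht(a_\nu)\ge\nu$.
\item For limit $\nu$, the map sending $\nu$ to one plus the largest $\xi<\nu$ at which some $a_\nu(i)$ disagrees with $a_\nu(0)$ is regressive.
\item Pressing down, plus countability of the levels of a coherent tree, gives a stationary $S$ and a $\delta$ such that for $\nu\in S$, all disagreements below $\nu$ lie below $\delta$, and $a_\nu\res\delta$ is constant.
\item Apply the hypothesis to $\{a_\nu(0)\res\nu:\nu\in S\}$, not to $\{a_\nu(0)\}$.
\end{enumerate}
For $\nu<\mu$ in the resulting antichain, $\zeta:=\Delta(a_\nu(0),a_\mu(0))\in[\delta,\nu)$. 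On $[\delta,\nu)$, every coordinate of $a_\nu$ and of $a_\mu$ agrees with its $0$-th coordinate, and below $\delta$ the tuples coincide. Hence $\Delta(a_\nu(i),a_\mu(i))=\zeta$ for all $i$. Also $a_\nu(i)(\zeta)=a_\nu(0)(\zeta)\neq a_\mu(0)(\zeta)=a_\mu(j)(\zeta)$ for all $i,j$, which gives every cross incomparability at once.

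An equivalent route: take the root $r$ of the $\Delta$-system of the disagreement sets $D_a$, and apply the hypothesis to $a(0)\res\min(D_a\setm r)$.

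So the cross incomparability you single out as the main obstacle is immediate once $\zeta$ is placed correctly. The real difficulty is exactly what your thinning step assumes away.
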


\begin{theorem}\label{thm:crux}
  Assume $\prop(\T)$. For every club $C$
  and every uncountable $A \in \mathcal{A}(\T)$,
  there is a $\T$-safe forcing $\Q$ of
  size $\omega_{1}$, and family
  $\{D_{\xi} : \xi < \omega_{1}\}$ of dense subsets of $\Q$,
  such that whenever $G$ is filter of $\Q$ meeting every
  $D_{\xi}$, $\bigcup G$ is an uncountable subset of $A$ of elements which
  pairwise conflict on $C$.
\end{theorem}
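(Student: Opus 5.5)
The plan is to take for $\Q$ the natural poset of finite approximations. Fix distinct $T^{0},\dots,T^{n-1}\in\T$ with $A\seb\bigotimes_{i<n}T^{i}$. Let $\Q_{0}$ be the set of finite $p\seb A$ whose elements pairwise conflict on $C$, ordered by reverse inclusion. For $p\in\Q_{0}$ let $A_{p}=\{a\in A: p\cup\{a\}\in\Q_{0}\}$, and set
\[
\Q=\{p\in\Q_{0}: A_{p} \text{ is uncountable and } q\in\Q \text{ for every } q\seb p\},
\]
defined so that $\Q$ is downward closed (in the inclusion sense) in $\Q_{0}$. More carefully, I would define $\Q$ as the set of $p$ such that every $q\seb p$ has uncountable $A_{q}$ relative to $A\cap\bigcap_{b\in p}\{a: a \text{ conflicts with } b\}$.

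Let $D_{\xi}=\{p\in\Q: p \text{ contains an element of height}\ge\xi\}$. Then $\bigcup G$ is pairwise conflicting for any filter $G$, and it is uncountable as soon as $G$ meets every $D_{\xi}$. Two things remain: $\Q$ is nonempty with each $D_{\xi}$ dense, and $\Q$ is $\T$-safe. The size bound $|\Q|=\omega_{1}$ is automatic.

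\textbf{Density.} For density I would argue by contradiction using $\prop(\T)$. Suppose $p\in\Q$ but only countably many $a\in A_{p}$ satisfy $p\cup\{a\}\in\Q$, i.e.\ for all but countably many $a\in A_{p}$ the set $A_{p\cup\{a\}}$ is countable. This produces an uncountable subset of $A_{p}$ in which each element conflicts with only countably many others. Thinning recursively, I obtain an uncountable $A'\seb A_{p}$ with no two elements conflicting on $C$, contradicting $\prop(T^{0},\dots,T^{n-1})$. The same argument shows $\es\in\Q$, and then any $p\in\Q$ extends to members of $D_{\xi}$, since uncountably many admissible $a$ have height $\ge\xi$.

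\textbf{Safety.} This is the main step. Let $C'$ be a club, and let $R$ have $\dom(R)\seb\Q$ and $\ran(R)$ uncountable in $\bigotimes_{j<m}S^{j}$ with $S^{j}\in\T$ distinct. Enumerate $(p_{\xi},c_{\xi})\in R$ with distinct $c_{\xi}$, and apply the $\Delta$-system lemma so that $p_{\xi}=r\cup\{b^{0}_{\xi},\dots,b^{k-1}_{\xi}\}$ with tails above a fixed height. I then code each $\xi$ as a single tuple $e_{\xi}$: the coordinates of $b^{l}_{\xi}$ (living in the $T^{i}$, each repeated $k$ times) together with $c_{\xi}$ (in the $S^{j}$).

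Some trees repeat inside $e_{\xi}$, possibly with $T^{i}=S^{j}$ as well. Here I would use \zcref{lem:tuples}, applied tree by tree after grouping the coordinates lying in the same tree. This lets me thin so that, for $\xi\ne\eta$, all coordinates of $e_{\xi}$ are incomparable with the corresponding coordinates of $e_{\eta}$. It also makes all $\Delta$'s between $\xi$- and $\eta$-coordinates in the same tree equal; coherence is used to handle the cross terms $\Delta(b^{l}_{\xi}(i),b^{l'}_{\eta}(i))$. Collapsing duplicated trees then gives an uncountable set in some member of $\mathcal{A}(\T)$ over the distinct trees among the $T^{i},S^{j}$.

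Now apply $\prop(\T)$ with the club $C\cap C'$ to get $\xi\ne\eta$ whose collapsed tuples conflict. Conflict transfers through the equal-$\Delta$ thinning, so all $b^{l}_{\xi}$ conflict with all $b^{l'}_{\eta}$ on $C$, and $c_{\xi},c_{\eta}$ conflict on $C'$. To make $p_{\xi}\cup p_{\eta}$ a condition in $\Q$ rather than just in $\Q_{0}$, I also need uncountability of the relevant $A_{q}$. I would arrange this by choosing the $p_{\xi}$ first to be extendable witnesses, using the density argument above relativized to $r$.

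I expect the delicate point to be the cross-$\Delta$ bookkeeping for coordinates lying in the same tree, together with keeping $p_{\xi}\cup p_{\eta}$ inside $\Q$.
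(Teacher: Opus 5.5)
Your density argument is sound and is a legitimate alternative to the paper's passage to $\Q^{\alpha}$. It keeps only conditions $p$ with $A_p$ uncountable and uses a free-set argument together with $\prop$. The safety argument, however, has a genuine gap exactly where you flag it: the cross terms.

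\zcref{lem:tuples} only gives you two things. First, $\xi$-coordinates are incomparable with $\eta$-coordinates. Second, $\delta_i:=\Delta(b^{l}_{\xi}(i),b^{l}_{\eta}(i))$ does not depend on $l$. It gives no control of $\Delta(b^{l}_{\xi}(i),b^{l'}_{\eta}(i))$ for $l\neq l'$, and coherence does not supply it. By the ultrametric property, this cross term equals $\min(\epsilon_i,\delta_i)$ whenever $\epsilon_i:=\Delta(b^{l}_{\xi}(i),b^{l'}_{\xi}(i))\neq\delta_i$. So in one tree it can be the $\xi$--$\eta$ splitting and in another the internal one.

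For instance, suppose $7$ is the only nonzero point of the club below $11$, and take $\epsilon_i=10$, $\epsilon_{i'}=5$, $\delta_i=5$, $\delta_{i'}=10$. Then $b^{l}_{\xi}$ and $b^{l'}_{\xi}$ conflict, and the pair returned by $\prop$ conflicts. But both cross terms equal $5$, so $b^{l}_{\xi}$ and $b^{l'}_{\eta}$ do not conflict. Nothing in your thinning ($\Delta$-system, \zcref{lem:tuples}, collapsing) rules this out.

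The missing ingredient is the paper's pressing-down step:
\begin{enumerate}
\item Choose the witnesses along a club $F\seb C\cap C'$ of ordinals $N\cap\omega_1$, with all heights of the $\xi$-th witness in $[\xi,\min(F\setm(\xi+1)))$.
\item Use Fodor to find a stationary $\Gamma$ and $\gamma\in F$ such that the restrictions to $\gamma$ of the $\xi$-th tuples are the same for all $\xi\in\Gamma$.
\item Arrange also that every internal splitting $\Delta(b^{l}_{\xi}(i),b^{l'}_{\xi}(i))$ is either $<\gamma$ or $\ge\xi$.
\item Only then apply \zcref{lem:tuples} to the restrictions to $\xi$, and then $\prop$.
\end{enumerate}
This makes every $\xi$--$\eta$ splitting $\ge\gamma$, and the cross terms fall into cases. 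High/high reduces to the $\delta_i$. Mixed is separated by $\gamma\in C$. Low/low equals the internal value, and here one uses that $p_\xi$ is already pairwise conflicting. In the example above, the splitting at $10$ would lie below $\gamma$, so $\delta_i=5$ could not occur.

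Your modified $\Q$ also creates a second obligation that you leave open. Compatibility in your $\Q$ requires $A_{p_{\xi}\cup p_{\eta}}$ to be uncountable, and this does not follow from $p_\xi\cup p_\eta$ being pairwise conflicting. ``Choosing extendable witnesses relativized to $r$'' does not produce it either.

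It can be repaired as follows. Choose a node $x_{\xi}\in\bigotimes_{i<n}T^{i}$ of height $\max\{\hht(b):b\in p_\xi\}$ with uncountably many extensions in $A_{p_{\xi}}$; such an $x_\xi$ conflicts with every element of $p_\xi$. Carry it along as an extra member of the $\xi$-th finite set in the argument above. The argument then shows that $x_{\eta}$ conflicts with every element of $p_{\xi}\cup p_{\eta}$. Since conflict is inherited by extensions, all extensions of $x_{\eta}$ in $A$ lie in $A_{p_{\xi}\cup p_{\eta}}$.

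The paper sidesteps this entirely. It proves safety, hence ccc, for the plain poset of finite pairwise conflicting subsets of $A$ (your $\Q_0$). It then uses the ccc to find $\alpha$ such that $\Q^{\alpha}=\{q:(\forall a\in q)(\alpha\le\hht(a))\}$ has the required dense sets. Safety passes to $\Q^{\alpha}$ because compatibility there is still just union.
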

\begin{proof}
  Fix $A$ and $C$ as in the statement. Then for some
  $1 \le n < \omega$, $A \seb \bigotimes_{i < n}T^{i}$ for some
  distinct $T^{0},\dots,T^{n-1} \in \T$.  By going to an uncountable
  subset of $A$ we may assume that $A$ has at most one element for any
  given height $\alpha < \omega_{1}$. We will let
  $\mathbb{Q}$ be the set of finite subsets of $A$ of elements which pairwise
  conflict on $C$ ordered by reverse inclusion.
  We want to show that $\Q$ is $\T$-safe,
  so fix a $R$ as in \zcref{df:safe} and club $E$. We may assume that
  for all $(q,a),(q',a') \in R$, if $q \neq q'$, then $a \neq a'$.
  Towards
  a contradiction suppose that for no $(q,a),(q',a') \in R$,
  $q \not\perp q'$ and $a$ conflicts with $a'$ on $E$.
  We have that
  for some $1 \le n' < \omega$ and
  distinct $S^{0},\dots,S^{n'-1} \in \T$,
  $\ran(R)$ is an uncountable subset of $\bigotimes_{i < n'}S^{i}$. The rest
  of the proof gets significantly simpler if we assume that both lists,
  $T^{0},\dots,T^{n-1}$ and $S^{0},\dots,S^{n'-1}$ are equal, so let us
  see why this reduction is possible first.

  The easier part is to see that we can assume that
  $\{T^{0},\dots,T^{n-1}\} \seb \{S^{0},\dots,S^{n'-1}\}$, if not
  simply change $R$ to $R'$ where for each $(q,a) \in R$, $(q,a') \in R'$
  is such that $a'$ is a tuple of
  $\bigotimes\{T : T \in \{T^{0},\dots,T^{n-1},S^{0},\dots,S^{n'-1}\}\}$ which
  has $a$ as a sub-tuple. Then we have that if $a'$ and $b'$ conflict on $E$,
  then so do $a$ and $b$. Because of this we can assume that we only
  have one list $T^{0},\dots,T^{n'-1}$, and that $A$ is a subset
  of $\bigotimes_{i < n}T^{i}$ for some $n \le n'$. Via a typical thinning out
  argument and $\Delta$-system lemma, we may also assume that
  for $(q,a),(q',a') \in R$, if $q \neq q'$, then $q \cap q' = \es$
  and $|q| = |q'| = k$. Now
  pick a sufficiently large regular cardinal $\theta$, and a club
  $F \seb \{N \cap \omega_{1} : |N| = \aleph_{0}, T^{0},\dots,T^{n'-1},
  A,R,C,E \in N \prec H(\theta)\}$. For each $\nu \in F$,
  pick $(q_{\nu},a_{\nu}) \in R$
  such that every element in $q_{\nu} \cup \{a_{\nu}\}$ has height
  in $\left[\nu,\min(F\setm(\nu + 1))\right[$. Each $q_{\nu}$ is a finite set of
  elements of $\bigotimes_{i<n}T^{i}$ which pairwise conflict on $C$.
  Using elementarity one easily extends each tuple to one
  in $\bigotimes_{i < n'}T^{i}$ so that the resulting tuples remain
  pairwise conflicting on $C$.

  Finally, this allows to assume that $A$ and $\ran(R)$ are both
  subsets of $\bigotimes_{i < n}T^{i}$ for some $1 \le n < \omega$ and
  distinct $T^{0},\dots,T^{n-1} \in \T$, and also the following:
  For each $(q,a),(q',a') \in R$, if $(q,a) \neq (q',a')$ then
  $q \cap q' = \es$, $|q| = |q'| =: k$, and $a \neq a'$.

  Again let $F$ be a club as defined above, and let
  $\tup{(q_{\xi},a_{\xi}) : \xi \in F}$ be such that
  for each $\xi$, every element of $q_{\xi} \cup \{a_{\xi}\}$ has height
  in $\left[\xi,\min(F\setm(\xi+1))\right[$. For each $\xi$,
  let $q_{\xi} = \{a_{\xi,0},\dots,a_{\xi,k-1}\}$ and
  $a_{\xi,k} = a_{\xi}$. We will be using the fact that
  $F \seb C \cap E$ through out the proof.

  For each $\xi \in F$ and $i < n$ let
  $a_{\xi,i}^{*} := (a_{\xi,0}(i),\dots,a_{\xi,k}(i))\res \xi$.
  Observe then that for all $j,j' \le k$,
  $\Delta(a_{\xi,j}(i),a_{\xi,j'}(i)) < \xi$ iff
  $a_{\xi,i}^{*}(j) \neq a_{\xi,i}^{*}(j')$. By pressing down
  we obtain a stationary $\Gamma \seb F$,
  $\gamma \in F$ with $\gamma < \min(\Gamma)$,
  $a^{*}_{0},\dots,a^{*}_{n-1}$ such that:
  \begin{itemize}
    \item[(a)] For every $i < n$, $a^{*}_{i}(0),\dots,a^{*}_{i}(k) \in
    T^{i}_{\gamma}$ (not necessarily distinct).
    \item[(b)] For every $i < n$, for every $\xi \in \Gamma$,
    $a^{*}_{i}  = a_{\xi,i}^{*}\res \gamma$.
    \item[(c)] For every $i < n$, for every $j,j' \le k$,
    either $a^{*}_{i}(j) \neq a^{*}_{i}(j')$, or
    $\Delta(a_{\xi,j}(i),a_{\xi,j'}(i)) \ge \xi$ for all
    $\xi \in \Gamma$.
  \end{itemize}
  Applying
  \zcref{lem:tuples} $n$-times we find uncountable
  $\Gamma' \seb \Gamma$ satisfying:
  \begin{itemize}
    \item[(d)] For all $i < n$, for all $\xi < \eta$ in $\Gamma'$,
    every element of $a^{*}_{\xi,i}$ is incomparable with every element
    of $a^{*}_{\eta,i}$ and for all $j < j' \le k$,
    $\Delta(a^{*}_{\xi,i}(j),a^{*}_{\eta,i}(j)) =
    \Delta(a^{*}_{\xi,i}(j'),a^{*}_{\eta,i}(j'))$.
    \item[(e)] For all $\xi < \eta$ in $\Gamma'$,
    every element appearing in $q_{\xi} \cup \{a_{\xi}\}$ has
    height less than $\eta$.
  \end{itemize}
  Finally an application of $\prop(\T)$ to the club $F$ and the family
  $\{(a^{*}_{\xi,0}(0),\dots,a^{*}_{\xi,n-1}(0)) : \xi \in \Gamma'\}$, gives us
  $\xi < \eta$ in $\Gamma'$ satisfying that for all $i < i' < n$,
  $\nu_{F}(\Delta(a^{*}_{\xi,i}(0),a^{*}_{\eta,i}(0))) \neq
  \nu_{F}(\Delta(a^{*}_{\xi,i'}(0),a^{*}_{\eta,i'}(0)))$. However, by
  (d) we obtain the stronger:
  \begin{itemize}
    \item[(f)] For every $i < i' < n$ and $j,j' \le k$,
    $\nu_{F}(\Delta(a^{*}_{\xi,i}(j),a^{*}_{\eta,i}(j))) \neq
    \nu_{F}(\Delta(a^{*}_{\xi,i'}(j'),a^{*}_{\eta,i'}(j')))$.
  \end{itemize}

  We claim that $q_{\xi}$ and $q_{\eta}$ are compatible, and
  that $a_{\xi}$ and $a_{\eta}$ conflict on $E$. For this we need to do
  two things:
  \begin{itemize}
    \item Show that for all $j,j' < k$, $a_{\xi,j}$ and $a_{\eta,j'}$ are
    conflicting on $C$. 
    \item Show that $a_{\xi}$ and $a_{\eta}$ conflict on $E$.
  \end{itemize}
  The second one follows directly from (a)--(f) and the fact
  that $F  \seb E$. For the first one, the same argument, using $F \seb
  C$ works if $j = j'$. If $j \neq j'$ there are a few cases. In only
  one of these cases we actually use the fact that the elements of
  $q_{\xi}$ (and $q_{\eta}$) pairwise conflict on $C$. Fix $i < i' < n$,
  we need to show that $\nu_{C}(\Delta(a_{\xi,j}(i),a_{\eta,j'}(i)))
  \neq \nu_{C}(\Delta(a_{\xi,j}(i'),a_{\eta,j'}(i')))$.

  \begin{itemize}
    \item[$\blacktriangleright$] Case
    $\xi \le \Delta(a_{\xi,j}(i),a_{\xi,j'}(i))$ and
    $\xi \le \Delta(a_{\xi,j}(i'),a_{\xi,j'}(i'))$. First observe that
    in this case we also have
    that $\eta \le \Delta(a_{\eta,j}(i),a_{\eta,j'}(i)),
    \Delta(a_{\eta,j}(i'),a_{\eta,j'}(i'))$.
    So
    $\Delta(a_{\xi,j}(i),a_{\eta,j'}(i)) = \Delta(a^{*}_{\xi,i}(j),a^{*}_{\eta,i}(j))$
    and
    $\Delta(a_{\xi,j}(i'),a_{\eta,j'}(i')) = \Delta(a^{*}_{\xi,i'}(j),a^{*}_{\eta,i'}(j))$
    so we can conclude from (f).
    \item[$\blacktriangleright$] Case
    $\Delta(a_{\xi,j}(i),a_{\xi,j'}(i)) < \xi$ and
    $\xi \le \Delta(a_{\xi,j}(i'),a_{\xi,j'}(i'))$.
    In this case
    we have that $\Delta(a_{\xi,j}(i),a_{\eta,j'}(i)) =
    \Delta(a^{*}_{i}(j),a^{*}_{i}(j')) < \gamma$
    and $\gamma \le \Delta(a_{\xi,j}(i'),a_{\eta,j'}(i')) =
    \Delta(a^{*}_{\xi,i'}(j),a^{*}_{\eta,i'}(j'))$
    hold. But $\gamma$ was chosen in $C$, so we are done.
    \item[$\blacktriangleright$] Case
    $\xi \le \Delta(a_{\xi,j}(i),a_{\xi,j'}(i))$ and
    $\Delta(a_{\xi,j}(i'),a_{\xi,j'}(i')) < \xi$.
    This is symmetric to the previous
    case.
    \item[$\blacktriangleright$]
    Case $\Delta(a_{\xi,j}(i),a_{\xi,j'}(i)) < \xi$ and
    $\Delta(a_{\xi,j}(i'),a_{\xi,j'}(i')) < \xi$. Now
    we have that
    $\alpha_{0} := \Delta(a_{\xi,j}(i),a_{\eta,j'}(i))
    = \Delta(a_{\xi,j}(i),a_{\xi,j'}(i))$
    and $\alpha_{1} := \Delta(a_{\xi,j}(i'),a_{\eta,j'}(i'))
    = \Delta(a_{\xi,j}(i'),a_{\xi,j'}(i'))$.
    But $a_{\xi,j}, a_{\xi,j'} \in q_{\xi}$,
    thus they conflict on $C$, so $\nu_{C}(\alpha_{0})
    \neq \nu_{C}(\alpha_{1})$ and we are done.
  \end{itemize}

  We now know that $\Q$ is $\T$-safe. We need to construct
  our family of $\omega_{1}$ many dense sets which will give us
  our uncountable subfamily of $A$ of pairwise conflicting elements.
  One would hope that
  for each $\gamma < \omega_{1}$,
  $\{q \in \Q : (\exists a \in q)(\hht(a) > \gamma)\}$ is dense. But this
  is not necessarily the case.

  For each $\alpha < \omega_{1}$ let
  $\Q^{\alpha} := \{q \in \Q : (\forall a \in q)(\alpha \le \hht(a))\}$.
  Because $\Q^{\alpha} \seb \Q$, it should be clear that $\Q^{\alpha}$
  is $\T$-safe. Thus to finish the proof of the theorem
  is enough to show that there is $\alpha < \omega_{1}$ such that
  for all $\gamma < \omega_{1}$,
  $\{q \in \Q^{\alpha} : (\exists a \in q)(\hht(a) > \gamma)\}$
  is dense in $\Q^{\alpha}$. To see this, assume the contrary
  and construct
  a sequence $\tup{\beta_{\xi} : \xi < \omega_{1}}$ and
  $\tup{q_{\xi} : \xi < \omega_{1}}$ such that for all $\xi < \eta$,
  \begin{itemize}
    \item $\beta_{\xi} \le \min\{\hht(a) : a \in q_{\xi}\},
    \max \{\hht(a) : a \in q_{\xi}\} < \beta_{\eta}$.
    \item For no $a \in A$ with $\beta_{\eta} \le \hht(a)$,
    $q_{\xi} \cup \{a\} \in \Q$. 
  \end{itemize}
  Then $\{q_{\xi} : \xi < \omega_{1}\}$ is an uncountable antichain
  of $\Q$, a contradiction since we proved that $\Q$ is $\T$-safe,
  and thus ccc.
\end{proof}

\section{Consistency results}

The following is the analogous of Abraham-Shelah's \cite{ShelahAbraham}*{6.15}.

\begin{theorem}\label{thm:ma+psi}
  Let $\T$ be a family of suitable trees. Assume
  $\prop(\T)$ and $2^{\omega_{1}} = \omega_{2}$.
  Then
  there is a ccc forcing $\M(\T)$ of size $\omega_{2}$
  such that
  $\MA$ + $\propp(\T)$ + $2^{\omega} = 2^{\omega_{1}} = \omega_{2}$
  holds in any extension by
  $\M(\T)$.
\end{theorem}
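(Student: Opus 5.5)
We will build $\M(\T)$ as a finite support iteration $\tup{\P_{\alpha},\dt{\Q}_{\alpha} : \alpha < \omega_{2}}$ of $\T$-safe posets of size at most $\omega_{1}$, organised by a bookkeeping function. First note the ground model satisfies $2^{\omega} \le 2^{\omega_{1}} = \omega_{2}$. Fix a bookkeeping $\pi : \omega_{2} \to \omega_{2}\times\omega_{2}$ with $\pi(\alpha) = (\beta,\delta)$ satisfying $\beta \le \alpha$, such that every pair is hit cofinally often. At stage $\alpha$ we assume inductively that $\P_{\alpha}$ is $\T$-safe (by \zcref{lem:safe-iterations} this holds at limits), has a dense subset of size $\le\omega_{2}$, and so by \zcref{lem:safe-preserves} forces $\prop(\T)$. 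Since $\P_{\alpha}$ is ccc of size $\omega_{2}$ and $2^{\omega_{1}} = \omega_{2}$, there are only $\omega_{2}$ nice $\P_{\alpha}$-names for $\omega_{1}$-sized objects: ccc posets on $\omega_{1}$, pairs $(C,A)$ with $C$ a club and $A \in \mathcal{A}(\T)$, families of $\omega_{1}$ dense sets. Enumerate these at stage $\beta$ as $\tup{\dt{x}^{\beta}_{\delta} : \delta<\omega_{2}}$.

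At stage $\alpha$ with $\pi(\alpha) = (\beta,\delta)$ we have two kinds of tasks, say alternating on even and odd $\alpha$. (i) For the $\MA$ task: if $\dt{x}^{\beta}_{\delta}$ names a ccc poset $\dt{\Q}$ of size $\le\omega_{1}$ which in $V^{\P_{\alpha}}$ is $\T$-safe, let $\dt{\Q}_{\alpha} = \dt{\Q}$; otherwise use trivial forcing. (ii) For the $\propp$ task: if $\dt{x}^{\beta}_{\delta}$ names a pair $(C,A)$, then since $\P_{\alpha}\forces\prop(\T)$, we let $\dt{\Q}_{\alpha}$ be the $\T$-safe poset of \zcref{thm:crux} of size $\omega_{1}$, together with its $\omega_{1}$ dense sets. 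The resulting finite support iteration is $\T$-safe by \zcref{lem:safe-iterations}, hence ccc, of size $\omega_{2}$, and forces $2^{\omega}=\omega_{2}$ (adding reals cofinally often via e.g. Cohen forcing, which is $\T$-safe as it is a case covered by (i) after $\propp$ appears, or directly since Cohen forcing is $\sigma$-centered; alternatively $\MA$ alone yields $2^\omega\ge\omega_2$). Counting nice names gives $2^{\omega_{1}}\le\omega_{2}$ in the extension.

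Verification. For $\propp(\T)$: in $V^{\M(\T)}$ given a club $C$ and uncountable $A$, by the ccc we may shrink $A$ to size $\omega_{1}$ and replace $C$ by a ground-model subclub; both appear at some stage $\beta<\omega_{2}$ (by cofinality $\omega_{2}>\omega_{1}$), are handled at some $\alpha \ge \beta$, and the generic through $\dt{\Q}_{\alpha}$ meets the dense sets of \zcref{thm:crux}, yielding an uncountable pairwise conflicting $B\seb A$. For $\MA$: the main obstacle is that an arbitrary ccc poset $\Q$ of size $\omega_{1}$ in the final model need not be $\T$-safe a priori at the stage where it appears. Here we use \zcref{lem:psi+-safe}: in the final model $\propp(\T)$ holds, so $\Q$ is $\T$-safe there. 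We must pull this back: $\Q$ and the relevant witnesses $(C,R)$ for dangerousness are of size $\omega_{1}$, so appear at some bounded stage; we then argue by a closing-off argument that there is a stage $\alpha$ (of cofinality $\omega_1$, or a club of stages) where $\Q$ is present, and every danger witness in $V^{\P_{\alpha}}$ is, at the later point $\alpha'$ where the relevant $\propp$ task for $\ran(R)$ and $C$ has been performed, refuted — indeed once $\propp$ applies to $\ran(R)$ via an uncountable pairwise conflicting $B$, ccc of $\Q$ gives two compatible conditions with conflicting tuples. Since there are $\omega_{2}$ tasks and the catch-up is $\omega_{1}$-bounded, a standard reflection over $\omega_{2}$ gives an $\alpha$ where $\Q$ is $\T$-safe in $V^{\P_{\alpha}}$; as the bookkeeping hits the name cofinally, $\Q$ is forced at some such $\alpha$, and the generic meets the given $\omega_{1}$ dense sets. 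Hence $\MA$ holds. I expect making this reflection step rigorous (handling the fact that $\T$-safety at stage $\alpha$ quantifies over all relations $R$ in $V^{\P_\alpha}$, of which there are $\omega_2$) to be the delicate point; the cleanest fix is to note $\Q$ is $\T$-safe in $V^{\P_\alpha}$ iff $\P_\alpha*\Q$ forces $\prop(\T)$ (\zcref{rmk:converse-if-ccc}), and that $\MA$ for $\Q$ in the final model follows since $\Q$ remains ccc and safe in the final model and is forced at cofinally many stages $\alpha$ where it is already safe, obtained by a Löwenheim–Skolem closure of length $\omega_1$ at a stage of cofinality $\omega_{1}$.
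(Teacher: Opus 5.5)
The architecture matches the paper's: a finite support iteration of length $\omega_2$ of $\T$-safe posets, ccc and $\prop(\T)$-preserving by \zcref{lem:safe-iterations, lem:safe-preserves}. \zcref{thm:crux} then gives $\propp(\T)$, and \zcref{lem:psi+-safe} upgrades this to full $\MA$. Your separate $\propp$ tasks are harmless but redundant. The posets of \zcref{thm:crux} are $\T$-safe, of size $\omega_1$, and come with their $\omega_1$ dense sets, so task (i) already catches them; that is how the paper obtains $\propp(\T)$.

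The gap is in the $\MA$ verification. The obstacle you identify does not exist, and the reflection you sketch would not remove it as stated. Suppose you did obtain a club of ``good'' stages, or the good stages of cofinality $\omega_1$. A bookkeeping that hits the name of $\Q$ only cofinally often need not land on any of them: a cofinal subset of $\omega_2$ can miss a club entirely, for instance if all the hits are successor ordinals. Your ``cleanest fix'' via \zcref{rmk:converse-if-ccc} does not help either. It is a statement about $V[G_\alpha]^{\Q}$, not about the final model, so it does not give safety of $\Q$ at stage $\alpha$.

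The missing idea is that $\T$-dangerousness is upward absolute from $V[G_\alpha]$ to the final model $V[G]$. Suppose $(C,R)\in V[G_\alpha]$ witnesses that $\Q$ is $\T$-dangerous there. Then in $V[G]$:
\begin{itemize}
  \item $C$ is still a club;
  \item $\ran(R)$ is still an uncountable member of $\mathcal{A}(\T)$, since $\omega_1$ is preserved;
  \item compatibility in $\Q$ and conflicting on $C$ are absolute.
\end{itemize}
So $(C,R)$ still witnesses danger in $V[G]$.

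Hence, if $\Q$ is ccc in $V[G]$, it is $\T$-safe there by $\propp(\T)$ and \zcref{lem:psi+-safe}. It is therefore $\T$-safe in \emph{every} $V[G_\alpha]$ containing it. Every stage at which the bookkeeping hands you $\Q$ is then one where task (i) forces with it, and no closing-off over stages is needed. This is exactly the step the paper's one-paragraph proof uses implicitly.

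Your own parenthetical argument already contains this. Once the $\propp$ task for $(C,\ran(R))$ has been done at some later stage $\alpha'$, the ccc of $\Q$ in $V[G_{\alpha'+1}]$ (inherited from $V[G]$) gives two compatible conditions carrying conflicting tuples. That refutes any single danger witness at any stage $\alpha$. Safety is a universal statement over witnesses, so refuting each one by contradiction is all that is required. The worry about $\omega_2$ many relations $R$ is therefore misplaced.
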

\begin{proof}
  This is just like the proof of $\MA + 2^{\omega} = 2^{\omega_{1}} = \omega_{2}$,
  but we just force with $\T$-safe forcing posets. Then, by
  \zcref{lem:safe-iterations, lem:safe-preserves}
  one obtains a model
  of $\MA_{\omega_{1}}$ restricted to safe posets. By \zcref{thm:crux},
  $\propp(\T)$ holds in this model,
  and thus \zcref{lem:psi+-safe} implies that every
  ccc forcing is actually $\T$-safe, therefore the full
  $\MA_{\omega_{1}}$ holds.
\end{proof}

Now, this implies that
to finish the proof of \zcref{thm:main} it is enough to
show the consistency of ``$2^{\omega_{1}} = \omega_{2}$
+ there is a family $\T$ of $2^{\omega_{1}}$-many suitable trees
satisfying $\prop(\T)$''. The rest of this section is devoted to show that
the existence of such a family follows from
$\dia$, which of course is consistent with $2^{\omega_{1}} = \omega_{2}$.
We will adapt the construction
of $2^{\omega_{1}}$-many pairwise non club-isomorphic subtrees from
\cite{ShelahAbraham} and \cite{MartinezThesis}. A similar construction
is also done in \cite{ChavezKrueger}. 

Let us denote by $\S$ the subtree of $\omega^{<\omega_{1}}$ of finite-to-one
sequences.
For $\alpha \le \omega_{1}$, by a \emph{uniformly
  coherent subtree of $\S\res \alpha$},
we mean a coherent subset $T \seb \S\res\alpha$ that is closed under
taking initial segments and finite modifications,
and has height $\alpha$. We say that $b : \alpha \to \omega$
is \emph{good for $T$}, if $b \in \S_{\alpha}$ and
for all $\xi < \alpha$, $b\res \xi \in T$. Then
is easy to see
that there is a unique way of extending $T$ to
a uniformly coherent subtree of $\S \res (\alpha+1)$ that contains
$b$. We call this the \emph{extension of $T$ given by adjoining $b$}.

For $n \in \omega$ and $x,y \in \omega^{<\omega_{1}}$.
we let $x \tleq_{n} y$ denote that $x \tleq y$ and for all $\xi \in
\dom(y)\setm\dom(x)$, $y(\xi) \ge n$.
The following is easy to verify, and will be our main tool to extend
uniformly coherent trees.

\begin{lemma}\label{lem:fund-ext}
  Let $\alpha$ be a countable limit ordinal. Let $T$ be a uniformly
  coherent subtree of $\S \res \alpha$. Suppose that
  $\{t_{n} : n < \omega\} \seb T$ is such that for all $n < \omega$,
  $t_{n} \tleq_{n} t_{n+1}$ and
  $\sup\{\dom(t_{n}) : n < \omega\} = \alpha$. Then
  $b := \bigcup_{n < \omega}t_{n}$ is good for $T$.
\end{lemma}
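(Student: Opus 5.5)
We have to check two things about $b := \bigcup_{n<\omega} t_{n}$: that $b \in \S_{\alpha}$, and that $b\res\xi \in T$ for every $\xi < \alpha$.

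First, the $t_{n}$ form a $\tleq$-chain, so $b$ is a function. Its domain is $\bigcup_{n}\dom(t_{n}) = \alpha$, using that $\alpha$ is a limit and $\sup_{n}\dom(t_{n}) = \alpha$. We may assume the domains are cofinal and unbounded below $\alpha$; if some $\dom(t_{n})$ already equals $\alpha$, that is impossible because $t_{n}\in T\seb \S\res\alpha$.

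Next, $b$ is finite-to-one. Fix $m<\omega$. For $n>m$, every $\xi \in \dom(t_{n+1})\setm\dom(t_{n})$ has $b(\xi) = t_{n+1}(\xi) \ge n+1 > m$. Therefore $b^{-1}(\{m\}) \seb t_{m+1}^{-1}(\{m\})$, up to the finitely many earlier blocks. More precisely, $b^{-1}(\{m\}) \seb t_{m}^{-1}(\{m\})$, which is finite since $t_{m}\in\S$. (Indeed, any $\xi$ outside $\dom(t_{m})$ lies in some block $\dom(t_{n+1})\setm\dom(t_{n})$ with $n\ge m$, where the value is $\ge n+1>m$.) So $b \in \S_{\alpha}$.

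Finally, let $\xi<\alpha$ and pick $n$ with $\xi \le \dom(t_{n})$. Then $b\res\xi = t_{n}\res\xi$, and this lies in $T$ because $T$ is closed under initial segments. Hence $b$ is good for $T$.

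The main (mild) point is the finite-to-one verification. It uses the index shift in $\tleq_{n}$: new values added at stage $n+1$ are at least $n$, so each value $m$ can be newly hit only at stages $n \le m$, that is, within $\dom(t_{m+1})$. This suffices, since $t_{m+1}$ is finite-to-one. Coherence of $b$ with $T$ is automatic from $b\res\xi\in T$, and no further use of uniform coherence is needed.
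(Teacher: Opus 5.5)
Your proof is correct, and it is the routine verification the paper has in mind; the paper states the lemma as easy and gives no proof. The key steps are that $b$ is a function with domain $\alpha$, that $b\res\xi=t_n\res\xi\in T$ for large $n$, and that $b$ is finite-to-one because of the lower bound on newly added values.

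There is one slip to fix in your middle paragraph. The hypothesis $t_n\tleq_n t_{n+1}$ only forces new values to be $\ge n$, not $\ge n+1$. So your ``more precise'' inclusion $b^{-1}(\{m\})\seb t_m^{-1}(\{m\})$ is false in general, since the value $m$ may appear on $\dom(t_{m+1})\setm\dom(t_m)$. The correct bound is $b^{-1}(\{m\})\seb t_{m+1}^{-1}(\{m\})$, which is what your last paragraph uses, and it suffices.
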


By a \emph{finite modifier} we mean a partial finite function
$m : \omega_{1} \to \omega$. Given $t \in \S$ and a finite modifier
$m$, $mt : \dom(t) \to \omega$ is defined by letting
$mt(\xi) = t(\xi)$ if $\xi \notin \dom(m)$, and
$mt(\xi) = m(\xi)$ otherwise.

\begin{theorem}\label{lem:hard-ext}
  Let $\alpha$ be a countable limit ordinal,
  $n$ be a positive integer, $T^{0},\dots,T^{n-1}$ be uniformly coherent
  subtrees of $\S \res \alpha$, $C \seb \alpha$ club,
  and $A \seb \bigotimes_{i < n}T^{i}$ be maximal with respect
  to having no two elements conflicting on $C$. Then there are
  $b_{0},\dots,b_{n-1} \in \S_{\alpha}$ such that
  for all $i < n$, $b_{i}$ is good for $T^{i}$,
  and whenever
  $b^{'}_{0} =^{*} b_{0}, \cdots, b^{'}_{n-1} =^{*} b_{n-1}$, some member
  of $A$ conflicts on $C$ with
  $(b_{0}^{'},\dots,b_{n-1}^{'})$.
\end{theorem}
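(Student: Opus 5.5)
The plan is a diagonalization of length $\omega$ along a cofinal sequence in $\alpha$, built with \zcref{lem:fund-ext}. I assume $C$ is unbounded in $\alpha$ (the intended use is at limit points of a guessed club, and this unboundedness is the one point where an extra hypothesis may be needed; see the last paragraph).

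Enumerate all $n$-tuples $\vec m=(m_{0},\dots,m_{n-1})$ of finite modifiers with $\dom(m_{i})\seb\alpha$ as $\tup{\vec m^{k}:k<\omega}$, listing each tuple infinitely often. Any $(b'_{0},\dots,b'_{n-1})$ with $b'_{i}=^{*}b_{i}$ has the form $(m_{0}b_{0},\dots,m_{n-1}b_{n-1})$ for such a tuple. I will build $t^{i}_{k}\in T^{i}$, all $t^{0}_{k},\dots,t^{n-1}_{k}$ of a common height $\delta_{k}$, with $t^{i}_{k}\tleq_{k}t^{i}_{k+1}$ and $\delta_{k}\to\alpha$. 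Then \zcref{lem:fund-ext} makes each $b_{i}:=\bigcup_{k}t^{i}_{k}$ good for $T^{i}$.

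At stage $k$ there are two cases. If some $\dom(m^{k}_{i})\not\seb\delta_{k}$, just extend to a higher level. Otherwise do the following.

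1. Pick $\gamma_{0}<\gamma_{1}<\dots<\gamma_{n}$ in $C$ above $\delta_{k}$.
2. Using uniform coherence, choose extensions of $m^{k}_{i}t^{i}_{k}$ to a common level $\ge\gamma_{n}$, and from them two tuples $u,v$ with $u(i)\perp v(i)$ and $\Delta(u(i),v(i))\in[\gamma_{i},\gamma_{i+1})$. To get these, extend to a node and change one value in that interval; closure under finite modifications keeps everything in $T^{i}$ and finite-to-one.
3. Also arrange that every new value (on $\dom(u(i))\setm\delta_{k}$) is $\ge k$. A finite-to-one extension takes values $<k$ only finitely often on the new part, so finitely many modifications fix this.
4. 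Since the $\nu_{C}$-values of the $\Delta$'s are separated by the $\gamma_{i}$, $u$ and $v$ conflict on $C$. By maximality of $A$ one of the two conflicts with some $a\in A$: either $u\notin A$, so $A\cup\{u\}$ contains a conflicting pair involving $u$, or $u\in A$ and then $v$ conflicts with $a:=u$.
5. Let $w\in\{u,v\}$ be the tuple conflicting with $a$, and set $t^{i}_{k+1}$ to be $w(i)$ with the values on $\dom(m^{k}_{i})$ restored to those of $t^{i}_{k}$. This lies in $T^{i}$, extends $t^{i}_{k}$ with $\tleq_{k}$, and satisfies $m^{k}_{i}t^{i}_{k+1}=w(i)$.

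The point that makes the conflict permanent is that conflicting with $w$ means $a(i)\perp w(i)$ for each $i$. Hence $\Delta(a(i),x(i))=\Delta(a(i),w(i))$ for every $x(i)\sqsupseteq w(i)$, so $a$ conflicts on $C$ with every tuple extending $w$, in particular with $(m^{k}_{0}b_{0},\dots,m^{k}_{n-1}b_{n-1})$.

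Given any $b'_{i}=^{*}b_{i}$, choose $k$ with $\vec m^{k}$ the corresponding modifier tuple and $\delta_{k}>\max\bigcup_{i}\dom(m^{k}_{i})$. This is possible since each tuple is listed infinitely often, and it gives the conclusion.

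The delicate step is producing a pair $u,v$ that genuinely conflicts on $C$, so that maximality of $A$ can be invoked. This needs $n+1$ distinct points of $C$ above $\delta_{k}$, which is where unboundedness of $C$ in $\alpha$ is used. If $C$ were bounded, the argument would have to place the splittings below $\max C$ instead, and I expect the statement to be applied only with $\alpha$ a limit of $C$.
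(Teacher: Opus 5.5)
Your proof is correct and takes essentially the same route as the paper. Both build the $b_{i}$ by an $\omega$-length recursion via \zcref{lem:fund-ext}, handling each modifier tuple infinitely often. Both produce two extensions splitting in $n$ disjoint $C$-intervals, so they conflict on $C$ and not both (after modification) can lie in $A$. Maximality of $A$ then gives an $a\in A$ conflicting with the chosen node, and incomparability makes that conflict persist to $(m_{0}b_{0},\dots,m_{n-1}b_{n-1})$. Your dichotomy ``$u\notin A$ or $u\in A$'' is just a direct version of the paper's claim that some such extension lies outside $A$. Unboundedness of $C$ in $\alpha$ is already part of the hypothesis (``club'' in $\alpha$), and the paper uses it the same way when it chooses its cofinal $c\seb C$, so no extra assumption is needed.
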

\begin{proof}
  First note that we may assume that $\min(C) = 0$.
  Now fix
  $c \seb C$ cofinal in $\alpha$, of order type $\omega$
  satisfying that $0 \in c$, and for any two consecutive elements of $c$,
  there are at least $n-1$ element of $C$ strictly between them.
  For $\xi \in c$
  we let $\xi^{+}$ denote its successor in $c$, i.e., $\min(c \setm (\xi +1))$.

  Let $\tup{(m_{0,\xi},\dots,m_{n-1,\xi}) : \xi \in c}$ enumerate all
  $n$-element lists
  of finite modifiers with domain contained in $\alpha$,
  allow repetitions so that each element
  appears infinitely many times. We may also assume that
  $\dom(m_{i,\xi}) \seb \xi$ for all $\xi \in c$ and $i < n$.

  For each $i < n$, we will let $b_{i} := \bigcup_{\xi \in c}b_{i}(\xi)$,
  where for all $\xi \in c$, $b_{i}(\xi) \in T^{i}_{\xi}$ and
  $b_{i}(\xi) \tleq_{|c \cap \xi|} b_{i}(\xi^{+})$. By \zcref{lem:fund-ext},
  this will imply that each $b_{i}$ will indeed be good for $T^{i}$.
  We construct
  $b_{i}(\xi)$ simultaneously for all $i$ by recursion on $\xi$.
  Thus we must let $b_{i}(0) = \es$ for all $i < n$. Suppose now that
  for some $\xi \in c$ we have already defined
  $b_{i}(\xi)$ for all $i < n$. Let $k := |c \cap \xi|$. Now we explain
  how to define the $b_{i}(\xi^{+})$.

  We claim that
  there exists
  $(x_{0},\dots,x_{n-1}) \in \bigotimes_{i <n}T^{i}_{\xi^{+}}$ such that
  for all $i <n$, $b_{i}(\xi) \tleq_{k} x_{i}$, and
  $(m_{0,\xi}x_{0},\dots,m_{n-1,\xi}x_{n-1}) \notin A$.
  Suppose not. By assumption on $c$,
  there are $\xi = \eta_{0} < \dots < \eta_{n} = \xi^{+}$ all in
  $C$.
  Using the uniform coherence we easily find
  $(x_{0},\dots,x_{n-1}),\allowbreak (y_{0},\dots,y_{n-1})
  \in \bigotimes_{i <n}T^{i}_{\xi^{+}}$ such that for all $i < n$,
  $b_{i}(\xi) \tleq_{k} x_{i},y_{i}$,
  $y_{i} \neq x_{i}$, and
  $\eta_{i} \le \Delta(x_{i},y_{i}) < \eta_{i+1}$.
  Since for all $i < n$, $\dom(m_{i,\xi}) \seb \xi$, we
  have that $m_{i,\xi}y_{i} \neq m_{i,\xi}x_{i}$ and
  $\eta_{i} \le \Delta(m_{i,\xi}x_{i},m_{i,\xi}y_{i}) < \eta_{i+1}$.
  By assumption, both
  $(m_{0,\xi}x_{0},\dots,m_{n-1,\xi}x_{n-1})$ and $(m_{0,\xi}y_{0},\dots,m_{n-1,\xi}y_{n-1})$
  are elements of $A$. But this is impossible since by construction
  $(m_{0,\xi}x_{0},\dots,m_{n-1,\xi}x_{n-1})$ and $(m_{0,\xi}y_{0},\dots,m_{n-1,\xi}y_{n-1})$
  are conflicting on $C$.
  
  Using the claim we define
  $(b_{0}(\xi^{+}),\dots,b_{n-1}(\xi^{+}))$
  as any $(x_{0},\dots,x_{n-1}) \in \bigotimes_{i <n}T^{i}_{\xi^{+}}$ such that
  for all $i <n$, $b_{i}(\xi) \tleq_{k} x_{i}$, and
  $(m_{0,\xi} x_{0},\dots,m_{n-1,\xi} x_{n-1}) \notin A$. This finishes the
  construction.
  
  Let $b_{0}',\dots,b_{n-1}'$ be
  as in the statement of the lemma. Then there are finite modifiers
  $m_{0},\dots,m_{n-1}$, such that for all $i < n$, $\dom(m_{i}) \seb \alpha$
  and $b_{i}' = m_{i} b_{i}$. We need to show
  that $(m_{0}b_{0},\dots,m_{n-1}b_{n-1})$ conflicts on $C$
  with an element of $A$.
  Let $\xi \in c$ be such that
  $(m_{0},\dots,m_{n-1}) = (m_{0,\xi},\dots,m_{n-1,\xi})$. Then, by our construction,
  $(b_{0}(\xi^{+}),\dots,b_{n-1}(\xi^{+}))$ was chosen so that
  $(m_{0,\xi}b_{0}(\xi^{+}),\dots,\allowbreak m_{n-1,\xi}b_{n-1}(\xi^{+}))$ is
  not in $A$. Then, the maximality of $A$ implies that there must be
  some $a \in A$ conflicting on $C$ with
  $(m_{0,\xi}b_{0}(\xi^{+}),\dots,\allowbreak m_{n-1,\xi}b_{n-1}(\xi^{+}))$. Clearly
  $(b_{0}',\dots,b_{n-1}') = (m_{0}b_{0},\dots,m_{n-1}b_{n-1})$ extends
  $(m_{0,\xi}b_{0}(\xi^{+}),\dots,\allowbreak m_{n-1,\xi}b_{n-1}(\xi^{+}))$
  so we conclude that
  $(b_{0}',\dots,b_{n-1}')$ also conflicts on $C$ with $a$.
\end{proof}

Via a typical coding and decoding argument we obtain the following
variation of $\dia$. (see \cite{BrodskyRinotI} for a more general version).

\begin{lemma}\label{lem:diamond}
  Assume $\dia$. There
  is a sequence $\tup{X_{\alpha} : \alpha < \omega_{1}}$ of
  elements of $H(\omega_{1})$ such that
  whenever $1 \le n < \omega$, $t_{0},\dots,t_{n-1} \in 2^{\omega_{1}}$,
  $C \seb \omega_{1}$, and
  $A \seb \bigotimes_{i < n}\S$ is such that for all $\alpha < \omega_{1}$,
  $A\res \alpha := \{a \in A : \hht(a) < \alpha\}$ is countable,
  there are stationarily many $\alpha < \omega_{1}$ such that
  \begin{itemize}
    \item $X_{\alpha} = (t_{\alpha,0},\dots,t_{\alpha,n-1},c_{\alpha},A_{\alpha})
    \in (2^{\alpha})^{n}\times [\alpha]^{\le\omega}\times
    [\bigotimes_{i <n} \S\res \alpha]^{\le \omega}$.
    \item $C \cap \alpha = c_{\alpha}$.
    \item For all $i < n$, $t_{i}\res \alpha = t_{\alpha,i}$.
    \item $A \res \alpha = A_{\alpha}$.
  \end{itemize}
  Moreover, replacing some of the $X_{\alpha}$ with dummy values
  we  may assume that all of the $X_{\alpha}$ are
  elements of $(2^{\alpha})^{n}\times [\alpha]^{\le\omega}\times
    [\bigotimes_{i <n} \S\res \alpha]^{\le\omega}$
  for some $1 \le n < \omega$.
\end{lemma}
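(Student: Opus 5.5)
We will derive the statement from a single $\dia$-sequence $\tup{S_{\alpha} : \alpha < \omega_{1}}$ by the standard coding and decoding argument. First we fix a bijection $e : \omega_{1} \to H(\omega_{1})$. We choose it so that, for a club $D$ of limit ordinals $\alpha$, $e\res\alpha$ is a bijection onto $H(\omega_{1}) \cap L_{\alpha}$-like pieces; more precisely, onto a countable transitive set $H_{\alpha}$ containing $\alpha$, $2^{<\alpha}$, all finite modifiers with domain in $\alpha$, and $\S \res \alpha$ together with its finite products. Such a club exists by a standard closure argument: write $H(\omega_{1})$ as a continuous increasing union of countable sets, each of which we may assume transitive, since every element of $H(\omega_{1})$ is hereditarily countable.

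Given the data $n, t_{0},\dots,t_{n-1}, C, A$, we code it as a single subset $Z \seb \omega_{1}$. The set $Z$ consists of
\begin{itemize}
  \item the ordinals $e^{-1}(\tup{0,n})$,
  \item $e^{-1}(\tup{1,i,\xi})$ for $\xi \in t_{i}^{-1}(1)$,
  \item $e^{-1}(\tup{2,\xi})$ for $\xi\in C$,
  \item $e^{-1}(\tup{3,a})$ for $a \in A$.
\end{itemize}
The key point is that for $\alpha$ in the club $D'$ of $\alpha \in D$ closed under the relevant pairing, the following holds. Since $A\res\beta$ is countable for every $\beta$, and since $e$ maps onto elements of $H_{\alpha}$, we can use the countability hypothesis to shrink $D'$ further: we require that $e^{-1}[\{3\}\times (A\res\alpha)] \seb \alpha$ and that $e^{-1}(\tup{3,a}) \ge \alpha$ whenever $\hht(a) \ge \alpha$. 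Both requirements hold on a club, because each of the countably many $a$ of height $<\alpha$ has a code below some bound, and a closure argument handles the rest. Then $Z \cap \alpha$ decodes exactly $n$, the $t_{i}\res\alpha$, $C\cap\alpha$ and $A\res\alpha$.

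Now we define $X_{\alpha}$ by decoding $S_{\alpha}$ via $e\res\alpha$ whenever $S_{\alpha}$ decodes to a tuple of the required form, and we set $X_{\alpha}$ to a dummy value otherwise. By $\dia$, the set $\{\alpha : Z\cap\alpha = S_{\alpha}\}$ is stationary. Intersecting it with the club $D'$ gives stationarily many $\alpha$ with $X_{\alpha} = (t_{0}\res\alpha,\dots,t_{n-1}\res\alpha, C\cap\alpha, A\res\alpha)$, as required. The ``moreover'' clause is immediate from how we defined the dummy values: take them in $(2^{\alpha})^{1}\times[\alpha]^{\le\omega}\times[\S\res\alpha]^{\le\omega}$, for instance $(\es\text{-like}, \es, \es)$.

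The main obstacle is purely bookkeeping. We must make sure the club $D'$ is chosen so that $A\res\alpha$ is coded exactly below $\alpha$. No element of height $<\alpha$ may have its code above $\alpha$, and no element of height $\ge\alpha$ may have its code below $\alpha$. This is exactly where the hypothesis that each $A\res\alpha$ is countable is used. We would handle it by taking $D'$ to be the set of $\alpha$ that are closure points of the function sending $\beta$ to $\sup e^{-1}[\{3\}\times A\res\beta]+1$, intersected with the $\alpha$ for which $e[\alpha]\cap (\{3\}\times \bigotimes_{i<n}\S) \seb \{3\}\times \bigotimes_{i<n}\S\res\alpha$.
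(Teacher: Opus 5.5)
Your proposal is the standard coding-and-decoding argument that the paper invokes without giving details (it defers to Brodsky--Rinot), and it is correct. One claim should be deleted: no countable $H_\alpha$ can contain $2^{<\alpha}$ or $\S\res\alpha$ once $\alpha>\omega$, since both have size $2^{\aleph_0}$. You never actually use it, because the decoding only needs the club of $\alpha$ for which $e[\alpha]$ is transitive, closed under your pairing, and satisfies $e[\alpha]\cap\omega_1=\alpha$, together with the two conditions in your final paragraph (the first of which is exactly where the countability of $A\res\alpha$ enters).
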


\begin{theorem}\label{thm:existence}
  Assume $\dia$. There is a family $\T$ of uniformly coherent
  subtrees of $\S$ satisfying $\prop(\T)$ and $|\T| = 2^{\omega_{1}}$.
\end{theorem}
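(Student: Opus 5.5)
We build a tree $T_{s}$ for every $s \in 2^{\omega_{1}}$ using one recursive construction, driven by the sequence $\tup{X_{\alpha} : \alpha<\omega_{1}}$ from \zcref{lem:diamond}. By recursion on $\alpha \le \omega_{1}$, for every $\sigma \in 2^{\alpha}$ we define a uniformly coherent subtree $T(\sigma)$ of $\S\res\alpha$. We require coherence along branches of the binary tree: if $\sigma \tleq \tau$ then $T(\tau)\res\dom(\sigma) = T(\sigma)$.

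The recursion runs as follows.
\begin{itemize}
  \item At successor stages $\alpha+1$ we pass from $T(\sigma)$ to its full successor level. Adjoin to every $t\in T(\sigma)_{\alpha}$ the extensions $t^\frown k$ for $k<\omega$; this keeps finite-to-one and uniform coherence.
  \item At limits $\alpha$ where $X_{\alpha}$ does not code anything relevant, first let $T'(\sigma) = \bigcup_{\beta<\alpha}T(\sigma\res\beta)$. Then choose countably many good branches (via \zcref{lem:fund-ext}) so that every node of $T'(\sigma)$ lies below one of them, and let $T(\sigma)$ be the extension obtained by adjoining them. This guarantees height $\omega_{1}$ in the end.
  \item At a limit $\alpha$, suppose $X_{\alpha} = (t_{\alpha,0},\dots,t_{\alpha,n-1},c_{\alpha},A_{\alpha})$ with the $t_{\alpha,i}\in 2^{\alpha}$ distinct, $c_{\alpha}$ club in $\alpha$, and $A_{\alpha}\seb\bigotimes_{i<n}T'(t_{\alpha,i})$ maximal without two elements conflicting on $c_{\alpha}$. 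Then we must kill $A_{\alpha}$.
\end{itemize}

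Killing $A_{\alpha}$ is the delicate point. Enumerate all $\sigma\in 2^{\alpha}$ and all nodes to be covered; there are only countably many relevant $\sigma$, because $T(\sigma)$ depends only on information up to $\alpha$ and we only need to handle the countably many $\sigma$ arising from the trees in question. Since $2^{\alpha}$ may be uncountable, we proceed uniformly: for each $\sigma\in 2^{\alpha}$ we adjoin countably many branches. For each node $u\in T'(t_{\alpha,i})$, we want a tuple of branches $(b_0,\dots,b_{n-1})$, with $b_i$ above the prescribed node in coordinate $i$, obtained from \zcref{lem:hard-ext}. We apply that theorem to the trees $T'(t_{\alpha,i})\res$ (cone above $u$) and to $A_{\alpha}$ restricted to the cone; maximality still holds there, or we run the argument with $b_{i}(0)$ set to the chosen nodes instead of $\es$, which is the same proof. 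Doing this for each of the countably many choices of starting nodes gives countably many branches per $T'(t_{\alpha,i})$. Every branch we adjoin to $T'(t_{\alpha,i})$ must then have the property that each tuple of finite modifications, combined with branches adjoined in the other coordinates, conflicts with some member of $A_{\alpha}$.

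This is the main obstacle: a new level-$\alpha$ tuple may use branches from different applications of \zcref{lem:hard-ext}. I would handle it by choosing the branches in one joint recursion. Enumerate all $\omega$ many finite combinations (a choice of one branch index per coordinate together with modifiers) and interleave them in the $\omega$-length construction of \zcref{lem:hard-ext}. For every pair consisting of a combination and a modifier, some stage $\xi\in c$ ensures that the modified tuple at level $\xi^{+}$ is not in $A_{\alpha}$, and maximality then yields the conflict. The claim in that proof extends directly, since it only needs extensions of finitely many current nodes with prescribed splitting heights. For $\sigma$ not among the $t_{\alpha,i}$ we proceed as in the trivial limit case.

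Finally let $T_{s} = \bigcup_{\alpha}T(s\res\alpha)$ and $\T = \{T_{s} : s \in 2^{\omega_{1}}\}$. Distinct $s$ give distinct trees once we add a small coding step, for example by adjoining at successor stages different branch patterns depending on $\sigma$; alternatively, $\prop(\T)$ for $n=2$ already forces distinctness.

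To verify $\prop(\T)$, fix distinct $s_0,\dots,s_{n-1}$, a club $C$, and an uncountable $A\seb\bigotimes_{i<n}T_{s_i}$. Suppose no two elements of $A$ conflict on $C$, and extend $A$ to a maximal such set $A'$. By \zcref{lem:diamond}, stationarily many $\alpha$ guess $(s_i\res\alpha, C\cap\alpha, A'\res\alpha)$. We may choose one with $\alpha\in C$ a limit of $C$, the $s_i\res\alpha$ distinct, and $A'\res\alpha$ maximal in $\bigotimes_{i<n}T(s_{i}\res\alpha)$ for conflict on $C\cap\alpha$; the last condition holds on a club by a closure argument. Take any $a\in A'$ of height above $\alpha$. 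Its restriction to level $\alpha$ is a tuple of finite modifications of adjoined branches, so it conflicts on $C\cap\alpha$ with some $a'\in A'\res\alpha$. The splitting points lie below $\alpha$, hence $a$ and $a'$ conflict on $C$, a contradiction. This uses that $\nu_{C}$ and $\nu_{C\cap\alpha}$ agree below $\alpha$.
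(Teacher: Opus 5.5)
Your overall architecture is the paper's: a $\dia$-guided construction along $2^{<\omega_1}$, killing the guessed maximal set $A_\alpha$ with \zcref{lem:hard-ext}, and the reflection argument at a guessing $\alpha$. Deriving distinctness of the trees from $\prop$ for $n=2$ is also fine. Your alternative of coding at successor stages is not, because uniform coherence leaves no freedom there.

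\textbf{The gap.} The genuine gap is the step where you adjoin \emph{countably many} good branches to one tree, both at trivial limits and in your joint recursion. A tree containing two adjoined branches $b,b'$ is coherent only if $b =^{*} b'$. Good branches for a coherent $T$ need not satisfy this. The condition $b\res\xi =^{*} b'\res\xi$ for all $\xi<\alpha$ only bounds the disagreements below each $\xi$; $b$ and $b'$ may still differ on a set of order type $\omega$ cofinal in $\alpha$. Already at $\alpha=\omega$, any two finite-to-one $b,b'\colon\omega\to\omega$ are good. Choosing branches through different nodes, or extending them independently stage by stage in the joint recursion, gives no control over this. So your $T(\sigma)$ need not be coherent, hence need not be uniformly coherent or suitable.

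\textbf{The repair.} Fixing this also removes what you call the main obstacle. Since $T'(\sigma)$ is coherent and closed under finite modifications, a \emph{single} good branch $b$ already covers every node. For $t\in T'(\sigma)$ we have $t =^{*} b\res\dom(t)$. Hence $t \tleq mb$ for some finite modifier $m$ with $\dom(m)\seb\dom(t)$, and $mb$ is still finite-to-one. So one adjoins exactly one branch per tree, and the new level $\alpha$ is exactly the set of all $mb$ with $m$ a finite modifier.

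At a guessing stage, a single application of \zcref{lem:hard-ext} to $T'(t_{\alpha,0}),\dots,T'(t_{\alpha,n-1})$ gives $b_0,\dots,b_{n-1}$. Every level-$\alpha$ tuple $(m_0b_0,\dots,m_{n-1}b_{n-1})$ then conflicts on $c_\alpha$ with a member of $A_\alpha$, which is literally the conclusion of that theorem. No interleaving of branch combinations and no restriction to cones is needed. If you insist on countably many pairwise $=^{*}$ branches instead, they are all finite modifications of one of them, and your joint recursion collapses to this. That is the paper's proof.
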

\begin{proof}
  We build a sequence $\tup{T^{s} : s \in 2^{\le\omega_{1}}}$
  such that for all $\alpha \le \omega_{1}$ and $s \in 2^{\alpha}$,
  $T^{s}$ is a uniformly coherent subtree of $\S \res \alpha$,
  and for all $\beta < \alpha$, $T^{s\res\beta} = T^{s}\res\beta$.
  We want $\{T^{t} : t \in 2^{\omega_{1}}\}$ to witness the theorem.
  Fix $\tup{X_{\alpha} : \alpha < \omega_{1}}$
  a $\dia$-sequence as in \zcref{lem:diamond}.

  Recursively for $\alpha < \omega_{1}$, we define
  $T^{t}$ for all $t \in 2^{\alpha}$. Observe that we are forced
  to set $T^{\es} = \es$ and $T^{\tup{0}} = T^{\tup{1}} = \{\es\}$. This handles
  $\alpha = 0$ and $\alpha = 1$. At
  limits we must take the union, i.e., if $\alpha$ is limit, then
  $T^{t} := \bigcup_{\xi < \alpha}T^{t\res \xi}$ for all $t \in 2^{\alpha}$.
  Now assume that for some $0 < \alpha$
  we have defined $T^{t}$ for all $t \in 2^{\alpha}$, we need to define
  $T^{t^{\frown}\tup{0}}$ and $T^{t^{\frown}\tup{1}}$ for all $t \in 2^{\alpha}$.
  If $\alpha$ is successor, then for every $t \in 2^{\alpha}$,
  $T^{t}$ has a last level, and thus the uniform coherence
  says that both, $T^{t^{\frown}\tup{0}}$ and $T^{t^{\frown}\tup{1}}$,
  must be equal to $T^{t} \cup
  \{x^{\frown}\tup{n} : x \in T^{t}_{\alpha-1}, n < \omega\}$.
  It only remains to define
  $T^{t^{\frown}\tup{0}}$ and $T^{t^{\frown}\tup{1}}$
  for $t \in 2^{\alpha}$ with limit $\alpha$.

  Let $1 \le n < \omega$ be such that $X_{\alpha} =
  (t_{\alpha,0},\dots,t_{\alpha,n-1},c_{\alpha},A_{\alpha}) \in
  (2^{\alpha})^{n}\times [\alpha]^{\le\omega}\times
    [\bigotimes_{i <n} \S\res \alpha]^{\le\omega}$.
  For each $t \in 2^{\alpha}$, fix  ${T'}^{t}$
  some uniformly coherent
  subtree of $\S \res (\alpha+1)$ extending $T^{t}$. This exists
  by \zcref{lem:fund-ext}. For each $t \in 2^{\alpha}$ not in
  $\{t_{\alpha,0},\dots,t_{\alpha,n-1}\}$ define
  $T^{t^{\frown}\tup{0}}$ and $T^{t^{\frown}\tup{1}}$ both be equal
  to ${T'}^{t}$.  
  If any of the following conditions fail, do the
  same for all the $t \in \{t_{\alpha,0},\dots,t_{\alpha,n-1}\}$.
  \begin{itemize}
    \item For all $i < j < n$, $t_{\alpha,i} \neq t_{\alpha,j}$.
    \item $c_{\alpha}$ is unbounded in $\alpha$.
    \item $A_{\alpha} \seb \bigotimes_{i}T^{t_{\alpha,i}}$ is such that
    no two elements of $A_{\alpha}$ conflict on $c_{\alpha}$, and is maximal
    with respect to this property.
  \end{itemize}
  Otherwise, apply \zcref{lem:hard-ext}
  to $T^{t_{\alpha,0}},\dots,T^{t_{\alpha,n-1}}$, using $A = A_{\alpha}$
  and $C = c_{\alpha}$,
  to obtain $b_{0},\dots,
  b_{n-1}$. Then, for all $i < n$ define
  both $T^{t_{\alpha,i}^{\frown}\tup{0}}$ and
  $T^{t_{\alpha,i}^{\frown}\tup{1}}$ as the result of adjoining
  $b_{i}$ to $T^{t_{\alpha,i}}$. This finishes the construction
  of our family.

  Let $\T = \{T^{t} : t \in 2^{\omega_{1}}\}$. We claim
  that $\prop(\T)$ holds. For this fix $1 \le n < \omega$
  and $t_{0},\dots,t_{n-1}$ distinct elements of $2^{\omega_{1}}$.
  Also fix $C$ a club and $A \seb \bigotimes_{i < n}T^{t_{i}}$
  be such that no two elements of $A$ conflict on $C$.
  Extending $A$ if necessary, we may assume that $A$ is maximal
  with this property. We now show that $A$ is countable.

  Let $\gamma < \omega_{1}$ be such that for all $i < j < n$,
  $t_{i}\res \gamma \neq t_{j}\res \gamma$. One easily finds
  a club $D \seb C$ such that for all $\alpha \in D$,
  $\alpha$ is limit of elements in $C$ and $A \res \alpha$ is a maximal
  subset of $\bigotimes_{i < n}T^{t_{i}}\res \alpha$ having
  no two elements conflicting on $C \cap \alpha$. We may
  also assume that $\gamma < \min(D)$. Using the
  diamond property, let $\alpha \in D$ be such that
  $C \cap \alpha = c_{\alpha}$, $t_{i}\res \alpha = t_{\alpha,i}$
  for all $i < n$, and $A \res \alpha = A_{\alpha}$. Let
  $f : n \to 2$ be such that $t_{i}\res(\alpha+1) = t_{\alpha,i}^{\frown}
  \tup{f(i)}$.

  We claim that $A = A_{\alpha}$. If not, there is $a \in A$ of
  height at least $\alpha$. The maximality implies that in fact
  $A$ must have an element of height exactly $\alpha$,
  that is, an element $(b_{0}',\dots,b_{n-1}')$ where
  each $b_{i}'$ is an element of $T^{t_{\alpha,i}^{\frown}\tup{f(i)}}$
  of height $\alpha$. But 
  by the choice of $b_{0},\dots,b_{n-1}$ in
  the $\alpha$-th step of the construction, this implies that
  $(b_{0}',\dots,b_{n-1}')$ conflicts on $C$
  with an element of $A_{\alpha} \seb A$.
  So we conclude that $A = A_{\alpha}$.
\end{proof}

\section{Final remarks and open questions}

Is it possible to construct using $\dia$ (or stronger variations) a family
$\T$ of $2^{\omega_{1}}$-many suitable trees satisfying the stronger $\propp$?
A positive answer would render the work of \zcref{sec:non-dangerous}
unnecessary, since by \zcref{lem:psi+-safe}, the standard forcing
to obtain $\MA + \neg \CH$ would preserve $\prop(\T)$.

Observe that in our final model, we not only have $\prop(\T)$, but
$\propp(\T)$. So no ccc forcing can make the trees in $\T$
club-isomorphic again. Does $\propp$ have any other niceties over $\prop$?

Although we used $\dia$ to obtain the consistency of
``$2^{\omega_{1}} = \omega_{2}$
+ there is a family $\T$ of $2^{\omega_{1}}$-many suitable trees
satisfying $\prop(\T)$'', another option would be to iterate: The
starting point comes from the observation that given a suitable family satisfying
$\prop(\T)$, there is a $\T$-safe forcing extension in which there
is a suitable $U \notin \T$ such that $\prop(\T \cup \{U\})$. To see this,
start from a family $\T$ satisfying $\prop(\T)$,
we can first force with $\M(\T)$ so that it also holds $\propp(\T)$ and
that every
tree in $\T$ is special, and then add a single Cohen real, which
being a countable forcing, preserves both $\propp(\T)$ and that every
tree in $\T$ is special. In the resulting
model there is a suitable tree without stationary antichains
(see \cite{TodorcevicWalks}
and \cite{HrusakMartinez}). Now we use the following.

\begin{proposition}
  Assume $\propp(\T)$ and that every $T \in \T$ is special.
  Suppose that $U$ is
  a suitable tree with no stationary antichains, then $\prop(\T \cup \{U\})$.
\end{proposition}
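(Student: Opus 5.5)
We split according to whether $U$ occurs among the trees. Let $T^{0},\dots,T^{n-1}$ be distinct members of $\T\cup\{U\}$, $C$ a club, and $A\seb\bigotimes_{i<n}T^{i}$ uncountable; we must find two elements of $A$ that conflict on $C$.

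If $U$ does not occur, then $\propp(\T)$ gives this immediately. If $n=1$ and the only tree is $U$, conflicting just means incomparable. Since $U$ is suitable, the set of $U$-coordinates of $A$ contains an uncountable antichain, which settles this case. So assume $T^{0},\dots,T^{n-1}\in\T$ with $n\ge 1$, and that $A\seb T^{0}\otimes\cdots\otimes T^{n-1}\otimes U$. Write $a=(\bar a,u_{a})$.

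\textbf{Reduction.} Apply $\propp(\T)$ to the projection $\{\bar a : a\in A\}$ and the club $C$. This gives an uncountable $B\seb A$ whose $\T$-parts pairwise conflict on $C$. Using \zcref{lem:tuples}-style thinning and a $\Delta$-system argument, we also arrange that the $u_{a}$, $a\in B$, form an antichain and that the heights are spread out along $C$. It then remains to find $a\neq b$ in $B$ with $\nu_{C}(\Delta(u_{a},u_{b}))\neq\nu_{C}(\Delta(a(i),b(i)))$ for every $i<n$. Suppose this fails for all pairs. Then every $C$-interval $[\nu,\nu^{+})$ containing a $U$-splitting $\Delta(u_{a},u_{b})$ also contains one of the $T^{i}$-splittings $\Delta(a(i),b(i))$ of the same pair.

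\textbf{Exploiting the hypotheses.} The two hypotheses should push the splitting levels in opposite directions.

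For $U$, take a continuous $\in$-chain of countable elementary submodels $N_{\delta}$ and let $S$ be the stationary set of $\delta=N_{\delta}\cap\omega_{1}\in C$. For $\delta\in S$, pick $a_{\delta}\in B$ above $\delta$ and consider the nodes $u_{a_{\delta}}\res\delta$. If for stationarily many $\delta$ these nodes could be chosen to avoid the nodes $u_{b}$, $b\in B\cap N_{\delta}$, we would press down and obtain a stationary antichain in $U$, which is impossible. Hence for a club of $\delta$, the node $u_{a_{\delta}}\res\delta$ lies below some earlier $u_{b}$, so $\Delta(u_{a_{\delta}},u_{b})\ge\delta$. Thus the $U$-splittings with earlier elements reach arbitrarily high into $\delta$ on a stationary set.

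For the special trees $T^{i}$, use specializing maps $f_{i}:T^{i}\to\omega$ together with Fodor's lemma on the nodes $a_{\delta}(i)\res\delta$. This should show that, after shrinking $S$ to a stationary subset, these nodes are not limits of the nodes from $N_{\delta}$, i.e.\ $\Delta(a_{\delta}(i),b(i))<\gamma_{\delta}<\delta$ for all $b\in N_{\delta}$, with $\gamma_{\delta}$ fixed by pressing down. Pressing down once more makes $\gamma=\gamma_{\delta}$ constant.

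Now choose $\delta\in S$ and $b\in B\cap N_{\delta}$ with $\Delta(u_{a_{\delta}},u_{b})$ in a $C$-interval above $\gamma$. Every $T^{i}$-splitting of this pair lies below $\gamma$, so $\nu_{C}$ separates the $U$-coordinate from all the $T^{i}$-coordinates. Combined with the pairwise conflict of the $\T$-parts coming from $B$, the two elements conflict on $C$, which contradicts the assumption.

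\textbf{Main obstacle.} The crux is to make both stationarity arguments hold simultaneously on one stationary set while keeping the $U$-splitting strictly above the $C$-interval containing the $T^{i}$-splittings. A single pair $(a_{\delta},b)$ must work for all $i<n$ at once. I would handle this with one sequence of models, applying Fodor's lemma finitely many times: once for each $f_{i}$ and once for the $U$-antichain argument. The bound $\gamma$ must be fixed before the $U$-step is invoked, so the order of these applications matters.
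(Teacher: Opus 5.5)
There is a genuine gap. The two stationarity claims that carry your argument are not correct as formulated.

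\textbf{The special-tree step.} You want $\Delta(a_{\delta}(i),b(i))<\gamma_{\delta}<\delta$ for all $b\in B\cap N_{\delta}$, but this never holds. Assume, as you would, that $B,T^{i}\in N_{\delta}$, and let $\gamma<\delta$. Then $t:=a_{\delta}(i)\res\gamma\in N_{\delta}$, and $a_{\delta}$ witnesses that some element of $B$ has $i$-th coordinate extending $t$. By elementarity there is such a $b\in B\cap N_{\delta}$, so $\Delta(a_{\delta}(i),b(i))\ge\gamma$. Hence $\sup_{b\in B\cap N_{\delta}}\Delta(a_{\delta}(i),b(i))=\delta$ for every such $\delta$, whether or not the tree is special. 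A fixed bound $\gamma$ fails even if you compare only indices from one stationary set: $T^{i}_{\gamma}$ is countable, so two indices share $a(i)\res\gamma$.

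\textbf{The $U$-step.} This is misstated. For $b\in N_{\delta}$ we have $\hht(u_{b})<\delta$, so $u_{a_{\delta}}\res\delta$ cannot lie below $u_{b}$, and $\Delta(u_{a_{\delta}},u_{b})\ge\delta$ is impossible. The reverse, $u_{b}\le u_{a_{\delta}}$, would contradict the antichain you arranged. Moreover, ``$U$-splittings reach arbitrarily high into $\delta$'' holds for every tree by the same elementarity argument, so it never uses the absence of stationary antichains. The two hypotheses do not push splittings ``in opposite directions'' against all of $N_{\delta}$; that is not the mechanism.

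\textbf{What works.} Compare only pairs of indices from one stationary set, restricting at the smaller index. Choose $a_{\xi}\in B$ with $\hht(a_{\xi})\ge\xi$.
\begin{enumerate}
\item For each $i<n$ in turn, take a specializing map $f_{i}:T^{i}\to\omega$ and apply pigeonhole to $f_{i}(a_{\xi}(i)\res\xi)$. This yields a stationary $\Gamma\seb C$ on which $\{a_{\xi}(i)\res\xi:\xi\in\Gamma\}$ is an antichain for every $i<n$. So $\Delta(a_{\xi}(i),a_{\eta}(i))<\xi$ for all $\xi<\eta$ in $\Gamma$.
\item Since $U$ has no stationary antichains, $\{u_{a_{\xi}}\res\xi:\xi\in\Gamma\}$ is not an antichain. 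This gives $\xi<\eta$ in $\Gamma$ with $u_{a_{\xi}}\res\xi\le u_{a_{\eta}}$, that is, $\Delta(u_{a_{\xi}},u_{a_{\eta}})\ge\xi$.
\item As $\xi\in C$, we get $\nu_{C}$ of every $T^{i}$-splitting $<\xi\le\nu_{C}$ of the $U$-splitting.
\end{enumerate}
The separating point is $\xi$ itself, which varies with the pair. No fixed $\gamma$, elementary submodels or Fodor's lemma are needed. Combined with your reduction (the $\T$-parts pairwise conflict on $C$ and the $U$-coordinates form an antichain), this gives two elements of $A$ conflicting on $C$.
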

\begin{proof}
  The result is trivial if $\T$ is empty. Fix $1 \le n < \omega$,
  distinct $T^{0},\dots,T^{n-1} \in \T$, a club $C$,
  and uncountable
  $A \seb T^{0}\otimes \cdots \otimes T^{n-1} \otimes U$. We need to
  show that there are $a,b \in A$ conflicting on $C$.
  By $\propp(\T)$ and the fact that $U$ is suitable, going to an uncountable
  subset we may assume that the elements of $\{(a(0),\dots,a(n-1)) : a \in A\}$
  are pairwise conflicting on $C$ and that $\{a(n) : a \in A\}$ is an antichain.

  Let $\{a_{\xi} : \xi < \omega_{1}\}$
  enumerate an uncountable subset of $A$ such that
  for each $\xi < \omega_{1}$, $\xi \le \hht(a_{\xi})$.
  For each $\xi < \omega_{1}$, let $b_{\xi} := (a_{\xi}(0)\res \xi,\dots,
  a_{\xi}(n)\res \xi)$. Because for each $i < n$, $T^{i}$ is special,
  we find stationary $\Gamma \seb C$ such that for all
  $\xi < \eta$ and for all $i < n$, $b_{\xi}(i)$ is incomparable
  with $b_{\eta}(i)$. Now because $U$ has no stationary antichains,
  we find $\xi < \eta$ in $\Gamma$ such that
  $b_{\xi}(n) \le_{T} b_{\eta}(n)$. Observe then
  that for all $i < n$,
  $\Delta(a_{\xi}(i),a_{\eta}(i)) < \xi \le \Delta(a_{\xi}(n),a_{\eta}(n))$.
  Since $\xi \in C$, we conclude that $a_{\xi}$ and $a_{\eta}$ conflict on $C$.
\end{proof}

Of course, to have the family have size $2^{\omega_{1}}$ one has to make
an iteration of length $\omega_{2}$ preserving $2^{\omega_{1}} = \omega_{2}$,
which requires more care.

\bibliographystyle{amsrefs}

\end{document}